\documentclass[12pt,bezier]{article}
\usepackage{times}
\usepackage{booktabs}
\usepackage{pifont}
\usepackage{floatrow}
\hyphenpenalty=5000
\tolerance=1400
\floatsetup[table]{capposition=top}
\usepackage{caption}
\usepackage{mathrsfs}
\usepackage[fleqn]{amsmath}
\usepackage{amsfonts,amsthm,amssymb,mathrsfs,bbding}
\usepackage{txfonts}
\usepackage{graphics,multicol}
\usepackage{graphicx}
\usepackage{color}
\usepackage{amssymb}
\usepackage{caption}
\captionsetup{%
  figurename=Fig.,
  tablename=Tab.
}
\usepackage{cite}
\usepackage{latexsym,bm}
\usepackage{indentfirst}
\usepackage{color}
\usepackage[colorlinks=true,anchorcolor=blue,filecolor=blue,linkcolor=blue,urlcolor=blue,citecolor=blue]{hyperref}
\usepackage{extarrows}
\usepackage{cite}
\usepackage{latexsym,bm}
\usepackage{mathtools}
\pagestyle{myheadings} \markright{} \textwidth 150mm \textheight 235mm \oddsidemargin=1cm
\evensidemargin=\oddsidemargin\topmargin=-1.5cm

\newtheorem{thm}{Theorem}[section]

\newtheorem{claim}{Claim}

\newtheorem{lem}{Lemma}[section]
\newtheorem{cor}{Corollary}[section]
\newtheorem{pro}{Proposition}[section]

\theoremstyle{definition}

\addtocounter{section}{0}

\begin{document}
\title{Spectral radius, fractional $[a,b]$-factor and ID-factor-critical graphs\footnote{Supported by National Natural Science Foundation of China
(Nos. 11971445 and 12171440),
Henan Natural Science Foundation (No. 202300410377) and
Research Program of Science and Technology at Universities of Inner Mongolia Autonomous Region (No. NJZY22280).}}
\author{{\bf Ao Fan$^{a}$}, {\bf Ruifang Liu$^{a}$}\thanks{Corresponding author.
E-mail addresses: rfliu@zzu.edu.cn, fanaozzu@163.com, aoguoyan@163.com.}, {\bf Guoyan Ao$^{a, b}$}\\
{\footnotesize $^a$ School of Mathematics and Statistics, Zhengzhou University, Zhengzhou, Henan 450001, China} \\
{\footnotesize $^b$ School of Mathematics and Physics, Hulunbuir University, Hailar, Inner Mongolia 021008, China}}

\date{}
\maketitle
{\flushleft\large\bf Abstract}
Let $G$ be a graph and $h: E(G)\rightarrow [0,1]$ be a function.
For any two positive integers $a$ and $b$ with $a\leq b$, a fractional $[a,b]$-factor of $G$ with the indicator function $h$ is
a spanning subgraph with vertex set $V(G)$ and edge set $E_h$ such that $a\leq\sum_{e\in E_{G}(v)}h(e)\leq b$ for any vertex $v\in V(G)$,
where $E_h = \{e\in E(G)|h(e)>0\}$ and $E_{G}(v)=\{e\in E(G)| e~\mbox{is incident with}~v~\mbox{in}~G\}$.
A graph $G$ is ID-factor-critical if for every independent set $I$ of $G$ whose size has the same parity as $|V(G)|$, $G-I$ has a perfect matching.
In this paper, we present a tight sufficient condition based on the spectral radius for a graph to contain a fractional $[a,b]$-factor,
which extends the result of Wei and Zhang [Discrete Math. 346 (2023) 113269].
Furthermore, we also prove a tight sufficient condition in terms of the spectral radius for a graph with minimum degree $\delta$ to be ID-factor-critical.

\begin{flushleft}
\textbf{Keywords:} Spectral radius, Fractional $[a,b]$-factor, ID-factor-critical, Minimum degree

\end{flushleft}
\textbf{AMS Classification:} 05C50; 05C35

\section{Introduction}

Let $G$ be a finite, undirected and simple graph with vertex set $V(G)$ and edge set $E(G)$.
The order and size of $G$ are denoted by $|V(G)|=n$ and $|E(G)|=e(G)$, respectively.
We denote by $\delta(G)$, $i(G)$ and $o(G)$ the minimum degree,
the number of isolated vertices and the number of odd components of $G$, respectively.
We use $K_{n}$ and $I_{n}$ to denote the complete graph of order $n$ and the complement of $K_{n}$.
For a vertex subset $S$ of $G$, let $G[S]$ be the subgraph of $G$ induced by $S.$
Let $G_1$ and $G_2$ be two vertex-disjoint graphs.
We denote by $G_{1}+G_{2}$ the disjoint union of $G_1$ and $G_2$.
The join $G_1\vee G_2$ is the graph obtained from $G_{1}+G_{2}$ by adding all possible edges between $V(G_1)$ and $V(G_2)$.
For undefined terms and notions, one can refer to \cite{Bondy2008}.

Given a graph $G$ of order $n$, the {\it adjacency matrix} of $G$ is the 0-1 matrix $A(G)=(a_{ij})_{n\times n}$ indexed
by the vertex set $V(G)$ of $G$, where $a_{ij}=1$ when $v_{i}$ and $v_{j}$ are adjacent and $a_{ij}=0$ otherwise.
The eigenvalues of $A(G)$ are also called the eigenvalues of $G$.
Note that $A(G)$ is a real nonnegative symmetric matrix. Hence its eigenvalues are real,
which can be arranged in non-increasing order as $\lambda_{1}(G)\geq \lambda_{2}(G) \geq \cdots \geq \lambda_{n}(G).$
The largest eigenvalue of $A(G)$, denoted by $\rho(G)$, is called the {\it spectral radius} of $G$.

Let $g$ and $f$ be two integer-valued functions defined on $V(G)$ such that $0\leq g(v)\leq f(v)$ for each vertex $v$ in $V(G)$.
A {\it$(g,f)$-factor} of $G$ is a spanning subgraph $F$ of $G$ satisfying $g(v)\leq d_F(v)\leq f(v)$ for any vertex $v$ in $V(G)$.
Let $a$ and $b$ be two positive integers with $a\leq b$.
A $(g,f)$-factor is called an {\it $[a,b]$-factor} if $g(v)\equiv a$ and $f(v)\equiv b$ for any $v\in V(G)$.
An $[a,b]$-factor is called a {\it $1$-factor} (also called a {\it perfect matching}) if $a=b=1$.

Let $h : E(G)\rightarrow [0,1]$ be a function and $E_{G}(v)=\{e\in E(G)| e~\mbox{is incident with}~v~\mbox{in}~G\}$.
If $g(v)\leq\sum_{e\in E_{G}(v)}h(e)\leq f(v)$ holds for any vertex $v\in V(G)$,
then we call a subgraph $F$ with vertex set $V(G)$ and edge set $E_h$ a {\it fractional $(g,f)$-factor} of $G$ with indicator function $h$,
where $E_h = \{e\in E(G)|h(e)>0\}$.
A fractional $(g,f)$-factor is called a {\it fractional $[a,b]$-factor} if $g(v)\equiv a$ and $f(v)\equiv b$.
In particular, for a positive integer $k$, a fractional $[k, k]$-factor of a graph $G$ is called a {\it fractional $k$-factor} of $G$.
A fractional $1$-factor is also called a fractional perfect matching.
Note that if $G$ contains a $(g,f)$-factor, then it also contains a fractional $(g,f)$-factor.
However, if $G$ has a fractional $(g,f)$-factor, $G$ may not have a $(g,f)$-factor.

We start with the following well-known fractional $(g,f)$-factor theorem.

\begin{thm}[Anstee\cite{Anstee1990}]\label{thm1}
Let $G$ be a graph and $g,f: V(G)\rightarrow Z^+$ be two integer functions such that $g(v)\leq f(v)$ for all $v\in V(G)$.
Then $G$ has a fractional $(g,f)$-factor if and only if for any subset $S\subseteq V(G)$, we have
$$f(S)-g(T)+\sum_{v\in T}d_{G-S}(v)\geq0,$$
where $T=\{v|v\in V(G)-S~\mbox{and}~d_{G-S}(v)<g(v)\}$.
\end{thm}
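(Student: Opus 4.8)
The plan is to establish both implications directly from the definition: I would derive necessity by an elementary edge-counting argument and sufficiency from linear-programming duality (equivalently, a max-flow/min-cut analysis of the underlying fractional feasibility problem). I expect the main obstacle to be the combinatorial extraction of the set $S$ and the deficiency set $T$ from the duality certificate, since this requires turning an a priori real-valued dual optimum into a pair of vertex subsets.

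For necessity, suppose $G$ has a fractional $(g,f)$-factor with indicator function $h$, fix an arbitrary $S\subseteq V(G)$, and let $T=\{v\in V(G)-S: d_{G-S}(v)<g(v)\}$. For each $v\in T$ the lower constraint $\sum_{e\in E_G(v)}h(e)\ge g(v)$ together with $h(e)\le 1$ forces the edges joining $v$ to $S$ to carry $h$-weight at least $g(v)-d_{G-S}(v)$; summing over $v\in T$ bounds the total $h$-weight on edges between $T$ and $S$ from below by $g(T)-\sum_{v\in T}d_{G-S}(v)$. On the other hand, this same weight is at most $\sum_{u\in S}\sum_{e\in E_G(u)}h(e)\le f(S)$ by the upper constraint at the vertices of $S$. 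Combining the two bounds yields $f(S)-g(T)+\sum_{v\in T}d_{G-S}(v)\ge 0$, as required.

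For sufficiency I would argue the contrapositive. The existence of a fractional $(g,f)$-factor is exactly the feasibility of the linear system $0\le h(e)\le 1$ for all $e$ and $g(v)\le\sum_{e\in E_G(v)}h(e)\le f(v)$ for all $v$. If this system is infeasible, Farkas' lemma produces nonnegative multipliers $\alpha_v,\beta_v$ (for the lower and upper vertex constraints) and $\gamma_e$ (for the bounds $h(e)\le 1$) satisfying $\alpha_u+\alpha_v-\beta_u-\beta_v\le\gamma_e$ for every edge $e=uv$, together with $\sum_v \alpha_v g(v)-\sum_v\beta_v f(v)-\sum_e\gamma_e>0$. The crux is to show that such a certificate may be chosen with $\alpha_v,\beta_v\in\{0,1\}$ and with no vertex charged on both sides; this is where the integrality of $g$ and $f$ is used, via a threshold (layer-cake) and uncrossing argument on the dual variables. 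Setting $S=\{v:\beta_v=1\}$ and taking $\gamma_e=\max\{0,\alpha_u+\alpha_v-\beta_u-\beta_v\}$ optimally, the subset that maximizes the objective is precisely $T=\{v\notin S:d_{G-S}(v)<g(v)\}$, and a short incidence count gives $\sum_e\gamma_e=\sum_{v\in T}d_{G-S}(v)$. The Farkas inequality then reads $g(T)-f(S)-\sum_{v\in T}d_{G-S}(v)>0$, which contradicts the hypothesis; hence the system is feasible and the desired fractional factor exists.

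Finally, I would remark that the same conclusion can be reached by modeling the fractional factor as a feasible flow in an auxiliary network and invoking the Gale--Hoffman feasibility criterion, with $S$ and $T$ read off from a minimum cut. This route replaces the rounding step by a cut-extraction step, but it confronts the same essential difficulty of producing integral witness sets from a fractional optimum.
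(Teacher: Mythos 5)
The paper does not prove this statement: it is imported verbatim from Anstee's 1990 paper as Theorem 1.1 and used as a black box (only its specialization, Corollary 1.1, is actually invoked). So there is no in-paper proof to compare against, and your attempt must be judged on its own. Your necessity direction is complete and correct: the $h$-weight on the edges between $T$ and $S$ is at least $\sum_{v\in T}\bigl(g(v)-d_{G-S}(v)\bigr)$ because each $v\in T$ has only $d_{G-S}(v)$ edges avoiding $S$, each of weight at most $1$, and this same weight is at most $f(S)$ by the upper constraints on $S$. This is the standard easy half.

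The sufficiency direction, however, has a genuine gap where you yourself locate the crux: the reduction of the real-valued Farkas certificate $(\alpha,\beta,\gamma)$ to a $\{0,1\}$-valued one is asserted ("via a threshold (layer-cake) and uncrossing argument") but not carried out, and without it the proof is only a plan. The step can in fact be made to work: after normalizing so that $\min\{\alpha_v,\beta_v\}=0$ (which does not decrease the objective since $g\le f$) and setting $\phi_v=\alpha_v-\beta_v$, $\gamma_e=\max\{0,\phi_u+\phi_v\}$, the objective $\sum_v\min\{\phi_v g(v),\phi_v f(v)\}-\sum_{uv\in E}\max\{0,\phi_u+\phi_v\}$ decomposes \emph{exactly} (not merely subadditively) under the sign-respecting layer-cake $\phi=\int_0^\infty\psi^t\,dt$ with $\psi^t\in\{-1,0,1\}^V$, because $\max\{0,\phi_u+\phi_v\}$ is additive along that decomposition; hence some level set already violates the condition, and one then passes to the canonical $T=\{v\notin S: d_{G-S}(v)<g(v)\}$ since it maximizes $\sum_{v\in T}(g(v)-d_{G-S}(v))$. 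Two further small corrections: the integrality of $g$ and $f$ is not what makes this rounding work (the decomposition argument is indifferent to it), and the identity $\sum_e\gamma_e=\sum_{v\in T}d_{G-S}(v)$ needs the observation that edges inside $T$ are counted twice by $\gamma$ and twice by the degree sum, while edges from $T$ to $V-S-T$ are counted once by each. As written, your argument is a credible reconstruction of the known LP/flow proofs (Anstee's original route is the network-flow one you mention at the end), but it is not yet a proof.
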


If $g(v)\equiv a$ and $f(v)\equiv b$, then by Theorem \ref{thm1}, we obtain the following result.

\begin{cor}\label{cor1}
Let $G$ be a graph and let $a$, $b$ be two positive integers with $a\leq b$.
Then $G$ has a fractional $[a,b]$-factor if and only if for any subset $S\subseteq V(G)$, we have
$$b|S|-a|T|+\sum_{v\in T}d_{G-S}(v)\geq0,$$
where $T=\{v|v\in V(G)-S~\mbox{and}~d_{G-S}(v)<a\}$.
\end{cor}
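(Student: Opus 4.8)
The plan is to obtain the statement as an immediate specialization of Anstee's fractional $(g,f)$-factor theorem (Theorem~\ref{thm1}). Since $a$ and $b$ are positive integers with $a\leq b$, the constant functions $g,f:V(G)\rightarrow Z^{+}$ defined by $g(v)\equiv a$ and $f(v)\equiv b$ are legitimate integer functions satisfying $g(v)\leq f(v)$ for every $v\in V(G)$. By definition, a fractional $[a,b]$-factor of $G$ is precisely a fractional $(g,f)$-factor for this particular pair $(g,f)$, so Theorem~\ref{thm1} applies verbatim and I only need to rewrite its condition in terms of $a$ and $b$.

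First I would record that, with this choice, the set $T$ in Theorem~\ref{thm1} becomes
$$T=\{v\mid v\in V(G)-S \text{ and } d_{G-S}(v)<g(v)\}=\{v\mid v\in V(G)-S \text{ and } d_{G-S}(v)<a\},$$
which is exactly the set appearing in the statement. Next I would evaluate the two sum terms: since $f$ is constant equal to $b$ on $S$ and $g$ is constant equal to $a$ on $T$, we have $f(S)=\sum_{v\in S}f(v)=b|S|$ and $g(T)=\sum_{v\in T}g(v)=a|T|$.

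Substituting these identities into the inequality $f(S)-g(T)+\sum_{v\in T}d_{G-S}(v)\geq0$ furnished by Theorem~\ref{thm1} yields $b|S|-a|T|+\sum_{v\in T}d_{G-S}(v)\geq0$, which is the asserted condition, and the equivalence is preserved in both directions because each quantity has merely been rewritten. Since there is no genuine combinatorial obstacle here — the whole argument is a check that the hypotheses of Theorem~\ref{thm1} are satisfied followed by a substitution of constant values — the only point warranting care is confirming that $g\equiv a$ and $f\equiv b$ really define a valid instance of the theorem, namely that they map into $Z^{+}$ and obey $g\leq f$; both follow from $1\leq a\leq b$.
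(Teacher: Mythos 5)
Your proposal is correct and matches the paper exactly: the paper derives Corollary~\ref{cor1} by the very same specialization of Theorem~\ref{thm1} to the constant functions $g\equiv a$ and $f\equiv b$, with $f(S)=b|S|$ and $g(T)=a|T|$. No further argument is needed or given in the paper.
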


There are many sufficient conditions which can assure a graph to have a fractional $[a,b]$-factors
(see for example, \cite{Lu2013, Liu2001, Liu2008}).
Cho, Hyun, O and Park \cite{Cho2021} posed the spectral version conjecture for the existence of $[a,b]$-factors in graphs.
Fan, Lin and Lu \cite{Fan2022} proved that the conjecture holds for the case $n\geq 3a+b-1$.
Very recently, Wei and Zhang \cite{Wei2023} confirmed the full conjecture.

\begin{thm}[Wei and Zhang \cite{Wei2023}]\label{th23}
Let $a, b$ be two positive integers with $a\leq b$, and let $G$ be a graph of order $n\geq a+1$.
If $\rho(G)>\rho(K_{a-1}\vee(K_{n-a}+K_1))$ and $na\equiv 0 ~(\rm{mod}~2)$ when $a=b$, then $G$ has an $[a,b]$-factor.
\end{thm}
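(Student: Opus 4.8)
The plan is to prove the contrapositive: assuming that $G$ has no $[a,b]$-factor (and, when $a=b$, that $na$ is even), I will show that $\rho(G)\le\rho(K_{a-1}\vee(K_{n-a}+K_1))$, with equality only for the extremal graph itself. The first step is to extract a combinatorial obstruction. When $a<b$ the odd-component term in the classical $(g,f)$-factor theorem (Lov\'asz) is vacuous, since no component can carry $g\equiv f$ while $a\ne b$; choosing $T$ to be exactly the vertices of $G-S$ of degree less than $a$ then makes the integer $[a,b]$-factor criterion coincide with the fractional one in Corollary \ref{cor1}. Hence the failure of that criterion yields a set $S\subseteq V(G)$ with
\[
b|S|-a|T|+\sum_{v\in T}d_{G-S}(v)\le -1,\qquad T=\{v\in V(G)\setminus S:\ d_{G-S}(v)<a\}.
\]
When $a=b$ I would instead use Tutte's $k$-factor theorem; a standard parity argument shows the deficiency has the same parity as $na$, so the hypothesis $na\equiv 0\ (\mathrm{mod}\ 2)$ upgrades the obstruction to a pair $(S,T)$ of deficiency at most $-2$. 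In either case I set $s=|S|$, $t=|T|$ and record the key budget inequality $\sum_{v\in T}d_{G-S}(v)\le at-bs-1$.

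Next I would perform a structural reduction so that $\rho(G)$ is compared against a single explicit graph for each value of $s$. Adding any edge inside $S$, inside $C:=V(G)\setminus(S\cup T)$, between $S$ and the rest of the graph, or between two vertices of $C$ leaves both the set $T$ and the sum $\sum_{v\in T}d_{G-S}(v)$ unchanged, so it preserves the obstruction while not decreasing $\rho$. Thus I may assume $S$ is a clique completely joined to $V(G)\setminus S$ and that $C$ induces a clique completely joined to $S$; only the edges meeting $T$ remain constrained, governed by the budget $\sum_{v\in T}d_{G-S}(v)\le at-bs-1$. The conclusion is that $G$ is a spanning subgraph of a graph $H_s$ of the shape $K_s\vee\big(K_{n-s-t}+F_T\big)$, where $F_T$ is a sparse graph on $T$ whose edges (together with the $T$--$C$ edges) exhaust the budget; the densest admissible configuration concentrates the remaining budget so as to maximise $\rho$.

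The final step is the spectral comparison. For the conjectured extremal graph the partition $\{V(K_{a-1}),V(K_{n-a}),V(K_1)\}$ is equitable, so $\rho(K_{a-1}\vee(K_{n-a}+K_1))$ is the largest root of the characteristic polynomial of
\[
B=\begin{pmatrix} a-2 & n-a & 1\\ a-1 & n-a-1 & 0\\ a-1 & 0 & 0\end{pmatrix}.
\]
Since this graph does have a vertex of degree $a-1<a$, it genuinely lacks an $[a,b]$-factor, which establishes tightness. Each reduced graph $H_s$ likewise carries a small equitable (or nearly equitable, after refining $T$) quotient matrix, and I would compare the largest roots by a Perron-vector argument, showing $\rho(H_s)<\rho(B)$ for every admissible $s\ge1$, while among the $s=0$ configurations $\rho$ is maximised exactly when the unique deficient vertex has degree $a-1$ and the remaining $n-1$ vertices form a clique. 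The strict monotonicity of $\rho$ under edge addition for connected graphs then forces $G=K_{a-1}\vee(K_{n-a}+K_1)$ in the equality case.

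I expect the main obstacle to be precisely the comparison for $s\ge1$: proving that enlarging the clique $S$ and redistributing the degree budget over $T$ can never beat the single-low-degree-vertex configuration. This requires controlling a family of competing quotient matrices whose entries, and even whose dimension, depend on how the budget $at-bs-1$ is spread across $T$, together with a monotonicity estimate in $s$ that must stay valid down to the boundary order $n\ge a+1$; the case $a=b$ adds the further bookkeeping of the odd-component term. Handling all of these uniformly, rather than only the clean case $s=0$, is where the real work lies.
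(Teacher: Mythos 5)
First, note that Theorem \ref{th23} is not proved in this paper at all: it is quoted from Wei and Zhang \cite{Wei2023}, and the paper's own work (Theorem \ref{main1}) concerns the fractional analogue. The method actually used there --- and, essentially, by Wei and Zhang --- is quite different from yours: one first shows $\delta\geq a$ (otherwise $G$ is a subgraph of $K_{a-1}\vee(K_{n-a}+K_1)$ and Lemma \ref{le4} kills the spectral hypothesis, as in Claim \ref{cla33}), then converts the spectral bound into the edge bound $e(G)\geq\binom{n-1}{2}+\frac{a+1}{2}$ via the Hong--Shu--Fang/Nikiforov inequality of Lemma \ref{le3}, and finally runs a purely combinatorial count against the deficiency criterion (Lemma \ref{le2}). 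That route never needs to identify or compare the family of near-extremal graphs.

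Your proposal, by contrast, commits to the direct spectral-extremal strategy, and the decisive step is missing. You correctly extract the obstruction pair $(S,T)$ with $b|S|-a|T|+\sum_{v\in T}d_{G-S}(v)\leq -1$ and correctly observe that edges may be added away from $T$ without destroying it, so that $G$ embeds in some $K_s\vee(K_{n-s-t}+F_T)$. But the objects $H_s$ are not pinned down: the ``densest admissible configuration'' of the degree budget $at-bs-1$ over $T$ is itself a nontrivial optimization (it is not justified that concentrating the budget maximizes $\rho$, nor that $|T|=1$ beats larger $T$ even when $s=0$), and the quotient matrices you propose to compare have entries and dimensions depending on how that budget is distributed. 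The inequality $\rho(H_s)<\rho(K_{a-1}\vee(K_{n-a}+K_1))$ for all $s\geq 1$ and all admissible distributions --- valid down to the boundary order $n=a+1$, plus the odd-component bookkeeping when $a=b$ --- is exactly the content of the theorem, and you explicitly defer it (``where the real work lies''). As written, the argument establishes the setup and the tightness of the extremal graph but not the theorem; to complete it along these lines you would need either the uniform quotient-matrix comparison or, more economically, the edge-count reduction via Lemma \ref{le3} that the paper and \cite{Wei2023} actually use.
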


It is well known that if $G$ contains an $[a,b]$-factor, then it contains a fractional $[a,b]$-factor.
Inspired by the work of Wei and Zhang \cite{Wei2023},
we obtain a tight sufficient condition in terms of the spectral radius for a graph to contain a fractional $[a,b]$-factor.

\begin{thm}\label{main1}
Let $a$, $b$ be two positive integers with $a\leq b$, and let $G$ be a graph of order $n\geq a+1$.
If $$\rho(G)\geq\rho(K_{a-1}\vee(K_{n-a}+K_1))$$ and $na\equiv0~(\rm{mod}~2)$ when $a=b$, then $G$ has a fractional $[a,b]$-factor
unless $G\cong K_{a-1}\vee(K_{n-a}+K_1)$.
\end{thm}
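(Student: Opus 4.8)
The plan is to argue by contradiction. Suppose $G$ is a graph of order $n\ge a+1$ with $\rho(G)\ge\rho(K_{a-1}\vee(K_{n-a}+K_1))$, satisfying the parity hypothesis when $a=b$, such that $G\not\cong K_{a-1}\vee(K_{n-a}+K_1)$ but $G$ has no fractional $[a,b]$-factor. Applying Corollary \ref{cor1}, there is a set $S\subseteq V(G)$ with
$$b|S|-a|T|+\sum_{v\in T}d_{G-S}(v)\le -1,$$
where $T=\{v\in V(G)\setminus S: d_{G-S}(v)<a\}$; write $s=|S|$ and $t=|T|$. Since $\sum_{v\in T}d_{G-S}(v)\ge 0$ and $b\ge a$, this forces $bs\le at-1$ and hence $s<t$, and it also yields the budget bound $\sum_{v\in T}d_{G-S}(v)\le at-bs-1$. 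Because $K_{a-1}\vee(K_{n-a}+K_1)$ contains $K_{n-1}$, the hypothesis gives $\rho(G)\ge\rho(K_{n-1})=n-2$, so $G$ is forced to be dense; this is used to discard degenerate ranges of $(s,t)$.

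First I would treat the case $s=0$. Here the displayed inequality reduces to $\sum_{v\in T}(a-d_G(v))\ge 1$, which holds precisely when $T\ne\emptyset$, i.e. when $\delta(G)\le a-1$ (indeed $\delta(G)\ge a$ is necessary for a fractional $[a,b]$-factor, since each edge carries weight at most $1$). Choosing a vertex $u$ with $d_G(u)=\delta(G)\le a-1$, one checks that $G$ is a spanning subgraph of $K_{a-1}\vee(K_{n-a}+K_1)$: embed $u$ as the vertex of degree $a-1$, send $N_G(u)$ injectively into its neighbourhood, and complete the remaining $n-1$ vertices to $K_{n-1}$. Since adding edges does not decrease the spectral radius, $\rho(G)\le\rho(K_{a-1}\vee(K_{n-a}+K_1))$, with equality only if no edge is added, i.e. only if $G\cong K_{a-1}\vee(K_{n-a}+K_1)$. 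Together with $\rho(G)\ge\rho(K_{a-1}\vee(K_{n-a}+K_1))$ this forces $G\cong K_{a-1}\vee(K_{n-a}+K_1)$, contradicting our assumption. By the necessary condition, we may henceforth assume $\delta(G)\ge a$, so every violating set has $s\ge 1$ and $t\ge s+1\ge 2$.

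Now suppose $s\ge 1$. Using $d_{G-S}(v)<a$ for $v\in T$ together with the budget bound, I would bound the edges of $G$ by those of the densest graph $H_{s,t}$ compatible with the violation: take $G[S]=K_s$, join $S$ completely to the remaining $n-s$ vertices, set $G[U]=K_{n-s-t}$ on $U=V(G)\setminus(S\cup T)$, and attach the $t$ vertices of $T$ to $U$ so as to exhaust the budget $at-bs-1$ while keeping each $d_{G-S}(v)\le a-1$. This splits into the subcase $t\ge bs+1$, where every vertex of $T$ may reach degree $a-1$ in $G-S$, and the subcase $t\le bs$, where the budget is the binding constraint. Then $G$ is a spanning subgraph of $H_{s,t}$, so $\rho(G)\le\rho(H_{s,t})$, and $H_{s,t}$ admits an equitable partition into the classes $S$, $U$ and $T$ (refined by how $T$ meets $U$), through whose quotient matrix I would estimate $\rho(H_{s,t})$.

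The crux, and the main obstacle, is to show $\rho(H_{s,t})<\rho(K_{a-1}\vee(K_{n-a}+K_1))$ for every admissible pair $(s,t)$ with $s\ge 1$, which then contradicts $\rho(G)\ge\rho(K_{a-1}\vee(K_{n-a}+K_1))$ and completes the proof. A crude edge count combined with a bound such as $\rho(G)\le\sqrt{2e(G)-n+1}$ is not sharp enough in the delicate range of small $s$ and $t$, so the comparison must be carried out at the level of the quotient matrices: one shows the relevant spectral radius is monotone in the structural parameters and in $n$, reduces to the boundary configurations, and compares the resulting low-degree characteristic polynomials directly, handling separately the degenerate ranges where $n-s-t<a-1$ that the density bound $\rho(G)\ge n-2$ already excludes. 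Finally, the parity assumption $na\equiv 0\ (\mathrm{mod}~2)$ when $a=b$ is precisely what rules out an additional boundary configuration attaining the threshold in that case, so that $K_{a-1}\vee(K_{n-a}+K_1)$ remains the unique extremal graph.
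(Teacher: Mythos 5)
There is a genuine gap at what you yourself identify as ``the crux'': the claim that $\rho(H_{s,t})<\rho(K_{a-1}\vee(K_{n-a}+K_1))$ for every admissible pair $(s,t)$ with $s\ge 1$ is never proved, only described as something one \emph{would} do via quotient matrices, monotonicity in the structural parameters, and comparison of characteristic polynomials. Since this comparison is the entire content of the theorem once the $\delta\le a-1$ case is disposed of (your $s=0$ case, which matches the paper's Claim \ref{cla33}), the proof is incomplete. There is also a secondary flaw in the setup: it is not clear that $G$ is a spanning subgraph of a single graph $H_{s,t}$, because the budget $\sum_{v\in T}d_{G-S}(v)\le at-bs-1$ can be spent in many different ways --- in particular on edges inside $T$, which your $H_{s,t}$ omits and which count twice toward the sum --- so the containment $G\subseteq H_{s,t}$ would itself need a domination argument that is not supplied.

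Moreover, your dismissal of the edge-count route is what sends you down this harder path: the paper shows the crude edge count \emph{is} sharp enough once the minimum-degree refinement is used. Having established $\delta\ge a$, one applies the Hong--Shu--Fang bound (Lemma \ref{le3}) with $\delta=a$ rather than the weak form $\rho(G)\le\sqrt{2e(G)-n+1}$: combining $\rho(G)>n-2$ with $\rho(G)\le\frac{a-1}{2}+\sqrt{2e(G)-an+\frac{(a+1)^2}{4}}$ yields $e(G)\ge\binom{n-1}{2}+\frac{a+1}{2}$, and a short combinatorial lemma (Lemma \ref{le2}, proved from Corollary \ref{cor1} by splitting into $|T|\le b$ and $|T|\ge b+1$) shows that this many edges together with $\delta\ge a$ forces a fractional $[a,b]$-factor. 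The parity hypothesis $na\equiv 0\ (\mathrm{mod}\ 2)$ is consumed inside that lemma, in the subcases $a=b$ with $a$ odd or even, not in ruling out an extra extremal configuration at the spectral threshold as you conjecture. So no delicate quotient-matrix analysis is needed; what is missing from your argument is either that analysis carried out in full, or the realization that the degree-refined edge bound already closes the case $s\ge 1$.
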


Note that $4+\sqrt{32a^{2}+24a+5}> a+1$. Our Theorem \ref{main1} improves the following result.

\begin{thm}[Wang, Zheng and Chen \cite{Wang2023}]\label{th24}
Let $b\geq a\geq 1$ be two integers, and let $G$ be a graph of order $n\geq 4+\sqrt{32a^{2}+24a+5}$.
If $\rho(G)\geq\rho(K_{a-1}\vee(K_{n-a}+K_1))$, then $G$ has a fractional $[a,b]$-factor unless $G\cong K_{a-1}\vee(K_{n-a}+K_1)$.
\end{thm}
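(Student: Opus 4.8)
Throughout write $G_0:=K_{a-1}\vee(K_{n-a}+K_1)$. The plan is to argue by contradiction through the fractional factor criterion of Corollary \ref{cor1}. Suppose $G$ has order $n\ge 4+\sqrt{32a^2+24a+5}$ and satisfies $\rho(G)\ge\rho(G_0)$, yet has no fractional $[a,b]$-factor and $G\not\cong G_0$; the goal is to force $\rho(G)<\rho(G_0)$, a contradiction. We may assume $G$ is connected, since a disconnected $n$-vertex graph has $\rho\le\rho(K_{n-1})=n-2\le\rho(G_0)$ and the equality case is returned by the analysis below. By Corollary \ref{cor1} there is a set $S\subseteq V(G)$ with
$$b|S|-a|T|+\sum_{v\in T}d_{G-S}(v)\le-1,\qquad T=\{v\in V(G)-S:d_{G-S}(v)<a\}.$$
Writing $s=|S|$ and $t=|T|$ and using $d_{G-S}(v)\ge0$ together with $b\ge a$, this gives $at\ge bs+1\ge as+1$, whence $t\ge s+1$; in particular $t\ge1$, and $t\ge2$ once $s\ge1$.

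Next I would identify the densest graph consistent with this obstruction. Making $S$ complete and completely joined to $V(G)-S$, making $D:=V(G)-S-T$ a clique, and using that each vertex of $T$ has at most $a-1$ neighbours inside $G-S$, one sees that $G$ is a spanning subgraph of an explicit graph $G^\ast(s,t)$ of the form $K_s\vee(K_{n-s-t}+H_T)$, where $H_T$ carries the few edges incident to $T$ permitted by $\sum_{v\in T}d_{G-S}(v)\le at-bs-1$. Since $G^\ast(s,t)$ is connected, the Perron--Frobenius theorem gives $\rho(G)\le\rho(G^\ast(s,t))$, with equality only when $G=G^\ast(s,t)$, so it suffices to bound $\rho(G^\ast(s,t))$.

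The argument then splits on the pair $(s,t)$. When $(s,t)=(0,1)$ the extremal graph $G^\ast(0,1)$ is precisely $G_0$ --- the clique $K_{n-1}$ together with a single vertex of degree $a-1$ --- so $\rho(G)\le\rho(G_0)$ with equality forcing $G\cong G_0$, which yields the unique allowed exception. In every other case $t\ge2$, so the core clique $S\cup D=K_{n-t}$ has at most $n-2$ vertices, while the $t\ge2$ vertices of $T$ have degree bounded solely in terms of $a$ and $s$. Using the (nearly) equitable partition $(S,D,T)$, I would pass to the $3\times3$ quotient matrix
$$B(s,t)=\begin{pmatrix} s-1 & n-s-t & t\\ s & n-s-t-1 & 0\\ s & 0 & 0\end{pmatrix},$$
whose largest eigenvalue governs $\rho$ of the base extremal graph, and compare its characteristic polynomial with the quotient of the equitable partition of $G_0$ (whose largest root equals $\rho(G_0)$), the surplus edges of $H_T$ being absorbed by a crude estimate.

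The main obstacle is this final spectral comparison: proving $\rho(G^\ast(s,t))<\rho(G_0)$ for every feasible pair with $t\ge2$. The difficulty concentrates in the configurations closest to $G_0$ --- small $s$ with $t=2$ --- where the two graphs differ only by moving one vertex out of the core clique and redistributing a bounded number of edges; there one must show that shrinking the core from $K_{n-1}$ to $K_{n-2}$ always outweighs the at most $s+a-1$ edges each vertex of $T$ sends into the core. I would carry this out by evaluating the relevant characteristic polynomials at $x=\rho(G_0)$ and checking the sign, noting that $s+t\le n$ and $t\ge s+1$ force $s\le(n-1)/2$, so the core stays large; the hypothesis $n\ge 4+\sqrt{32a^2+24a+5}$ is exactly the threshold that makes the decisive sign correct in the worst configuration. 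Once $\rho(G^\ast(s,t))<\rho(G_0)$ is established in all these cases, each contradicts $\rho(G)\ge\rho(G_0)$, leaving $G\cong G_0$ as the only possibility.
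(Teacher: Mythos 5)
Your overall strategy (reduce to the obstruction of Corollary \ref{cor1}, majorize $G$ by an extremal configuration $G^\ast(s,t)$, and compare spectral radii) is a legitimate route, in the style of Wei--Zhang, but the proof as written stops exactly where the work begins. The decisive step --- showing $\rho(G^\ast(s,t))<\rho(K_{a-1}\vee(K_{n-a}+K_1))$ for every feasible pair with $t\ge 2$ --- is only announced (``I would carry this out by evaluating the relevant characteristic polynomials\dots''), and the claim that $n\ge 4+\sqrt{32a^{2}+24a+5}$ is ``exactly the threshold that makes the decisive sign correct'' is asserted, not verified. Moreover, the quotient matrix $B(s,t)$ you display describes $K_s\vee(K_{n-s-t}+I_t)$, i.e., it ignores the up to $a-1$ edges that each vertex of $T$ may send into $D\cup T$; these edges are precisely what makes the comparison delicate (for $(s,t)=(0,1)$ they turn the configuration into the extremal graph itself), so ``absorbed by a crude estimate'' cannot stand without an actual estimate. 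Note also that $H_T$ is not a single explicit graph: the admissible neighborhood patterns of $T$ vary subject only to the degree and sum constraints, so you must either maximize over all of them or bound $\rho$ uniformly. As it stands the argument has a genuine gap at its quantitative core.

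For contrast, the paper establishes the stronger Theorem \ref{main1} (which covers this statement, apart from the parity proviso when $a=b$, and with the weaker requirement $n\ge a+1$) by a different and shorter route that avoids the $(s,t)$ case analysis of extremal graphs altogether: the spectral hypothesis first forces $\delta\ge a$ (otherwise $G$ is a subgraph of $K_{a-1}\vee(K_{n-a}+K_1)$ and Lemma \ref{le4} applies), then the Hong--Shu--Fang/Nikiforov bound (Lemma \ref{le3} together with Proposition \ref{pro}) converts $\rho(G)>n-2$ into the edge count $e(G)\ge\binom{n-1}{2}+\frac{a+1}{2}$, and finally the purely combinatorial Lemma \ref{le2} finishes via the same $\varphi(S,T)$ criterion you invoke, but with elementary counting in place of eigenvalue comparisons. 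If you want to salvage your approach, the missing content is precisely a quantitative lemma comparing $\rho$ of $K_s\vee(K_{n-s-t}+H_T)$ with $\rho(K_{a-1}\vee(K_{n-a}+K_1))$ for $t\ge 2$; until that is supplied, the proposal is a plan rather than a proof.
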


A graph $G$ is {\it independent-set-deletable factor-critical}, shortly {\it ID-factor-critical}, if for every
independent set $I$ of $G$ whose size has the same parity as $|V(G)|$, $G-I$ has a perfect matching.
Let $S_{n, k}$ be the join of a clique on $k$ vertices with an independent set of $n-k$ vertices for $n>k$. That is to say, $S_{n, k}=K_{k}\vee I_{n-k}$.

\begin{thm}[Tutte \cite{Tutte1947}]
A graph $G$ has a perfect matching if and only if $o(G-S)\leq|S|$ for every $S\subseteq V(G)$.
\end{thm}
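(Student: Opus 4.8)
The necessity is immediate from a parity count. Suppose $G$ has a perfect matching $M$, and fix any $S\subseteq V(G)$. If $C$ is an odd component of $G-S$, then since $|V(C)|$ is odd, $M$ cannot match every vertex of $C$ to another vertex of $C$; hence some vertex of $C$ is matched by $M$ to a vertex outside $C$, which must lie in $S$ because $C$ is a component of $G-S$. Distinct odd components use distinct vertices of $S$ in this way, so $o(G-S)\le |S|$.

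For the sufficiency I would argue by contradiction, passing to an edge-maximal counterexample. First, taking $S=\varnothing$ in the hypothesis gives $o(G)=0$, so $n=|V(G)|$ is even. Suppose some graph satisfying the condition has no perfect matching; since adding an edge only merges components and therefore cannot increase $o(\cdot-S)$ for any $S$, the condition is preserved under adding edges, and among all such counterexamples on the vertex set $V(G)$ I would choose one, $G^{*}$, that is edge-maximal. Thus $G^{*}$ has no perfect matching but $G^{*}+e$ does for every non-edge $e$. Let $U$ be the set of vertices of $G^{*}$ adjacent to all other vertices. The heart of the argument is the structural claim that every component of $G^{*}-U$ is a complete graph. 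Granting this, a perfect matching of $G^{*}$ can be assembled directly: because $o(G^{*}-U)\le |U|$, match one vertex of each odd component of $G^{*}-U$ to a distinct vertex of $U$; the remaining (even) vertices of each such component form a clique and are matched internally, each even component is a clique and is matched internally, and the leftover vertices of $U$ are mutually adjacent and even in number by the global parity, so they too are matched internally. This perfect matching contradicts the choice of $G^{*}$.

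It remains to establish the claim, and this is where the real work lies. If some component of $G^{*}-U$ is not complete, then it contains a path $x\,y\,z$ with $xz\notin E(G^{*})$, and since $y\notin U$ there is a vertex $w$ with $yw\notin E(G^{*})$. By maximality, $G^{*}+xz$ has a perfect matching $M_{1}$ (necessarily using $xz$) and $G^{*}+yw$ has a perfect matching $M_{2}$ (necessarily using $yw$). I would then analyze the symmetric difference $M_{1}\triangle M_{2}$, a disjoint union of even cycles alternating between the two matchings, in which both $xz$ and $yw$ appear. When $xz$ and $yw$ lie on different cycles, swapping one cycle to its $M_{1}$-edges while keeping $M_{2}$ elsewhere yields a perfect matching of $G^{*}$ avoiding both $xz$ and $yw$, a contradiction. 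The main obstacle is the remaining case, in which $xz$ and $yw$ lie on a common cycle $D$, since there the two bad edges cannot be removed by a single cycle-swap. I expect to resolve it by exploiting the common neighbour $y$: traversing $D$ from $y$ in its two directions, rerouting along the alternating edges of each arc, and closing up with the genuine edge $yz$; a short parity check on the two arcs shows that this produces a perfect matching of $G^{*}$ using neither $xz$ nor $yw$, again contradicting the maximality of $G^{*}$ and completing the proof.
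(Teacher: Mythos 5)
The paper does not prove this statement at all: it is quoted as Tutte's classical theorem with a citation to the 1947 paper, so there is nothing in the source to compare against. Your argument is the standard (Lov\'asz/Anderson-style) proof and it is correct. The necessity direction via counting odd components is right; for sufficiency, the reduction to an edge-maximal counterexample $G^{*}$ is valid because, as you note, adding an edge can only merge two components of $G-S$ and hence never increases $o(G-S)$; the assembly of a perfect matching from the structural claim uses the parity identity $|U|+o(G^{*}-U)\equiv n\equiv 0 \pmod 2$ exactly as you indicate. In the crucial case where $xz$ and $yw$ lie on the same alternating cycle $D$, the rerouting you sketch does close up: after relabelling $x$ and $z$ if necessary so that $D$ reads $y,w,\dots,z,x,\dots,y$, the arc from $w$ to $z$ begins with an $M_{1}$-edge and ends with an $M_{2}$-edge, so deleting $z$ leaves an even arc perfectly matched by its $M_{1}$-edges, while the arc from $x$ back to $y$ with $y$ deleted is perfectly matched by its $M_{2}$-edges; together with the genuine edge $yz$ and with $M_{2}$ off $D$ this is a perfect matching of $G^{*}$ avoiding both $xz$ and $yw$. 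So the "short parity check" you defer does go through, and the proof is complete.
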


The following theorem is a direct consequence of Tutte's Theorem.

\begin{thm}\label{thm100}
A graph $G$ is ID-factor-critical if and only if $o(G-I-S)\leq|S|$ for every independent set $I$ such that $|I|$ has the same parity as $|V(G)|$ and every subset $S\subseteq V(G)-I$.
\end{thm}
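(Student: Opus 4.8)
The plan is to apply Tutte's Theorem to the graph $G-I$ separately for each admissible independent set $I$, so that the whole statement reduces to bookkeeping over all such $I$. First I would record the role of the parity hypothesis: a graph with a perfect matching must have even order, and $|V(G-I)|=|V(G)|-|I|$ is even precisely when $|I|$ has the same parity as $|V(G)|$. Thus the admissible independent sets appearing in the definition of ID-factor-criticality are exactly those $I$ for which $G-I$ can conceivably carry a perfect matching, and it suffices to test Tutte's condition on each such $G-I$.

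For the forward implication, I would assume $G$ is ID-factor-critical and fix an arbitrary independent set $I$ whose size matches the parity of $|V(G)|$. By definition $G-I$ has a perfect matching, so Tutte's Theorem applied to the graph $H=G-I$ yields $o(H-S)\leq|S|$ for every $S\subseteq V(H)$. Since $V(H)=V(G)-I$ and $H-S=(G-I)-S=G-I-S$, this is exactly the desired inequality $o(G-I-S)\leq|S|$ for every $S\subseteq V(G)-I$.

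For the converse, I would assume the stated inequality and again fix an arbitrary admissible independent set $I$. The hypothesis, restricted to this particular $I$, asserts precisely that $o((G-I)-S)\leq|S|$ for every $S\subseteq V(G-I)$, which is the Tutte condition for $H=G-I$; hence $G-I$ has a perfect matching. As $I$ ranges over all independent sets whose size matches the parity of $|V(G)|$, this gives exactly that $G$ is ID-factor-critical.

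I do not expect a substantive obstacle here, since the result is a genuine reformulation of Tutte's Theorem once the outer quantifier over $I$ is peeled off. The only points deserving a little care are the identification $V(G-I)=V(G)-I$, so that $S$ ranges over the same family in both formulations, and the observation that the parity condition is automatically consistent with the Tutte inequality: taking $S=\emptyset$ in $o(G-I-S)\leq|S|$ forces $o(G-I)=0$, whence every component of $G-I$ is even and $|V(G)|-|I|$ is even.
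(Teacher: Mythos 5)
Your proposal is correct and is exactly the argument the paper intends: the paper states this result as a direct consequence of Tutte's Theorem without writing out a proof, and your quantifier-peeling application of Tutte's Theorem to each graph $G-I$ is precisely that consequence made explicit. No issues.
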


Using Theorem \ref{thm100}, we prove a tight spectral condition for a graph with minimum degree $\delta$ to be ID-factor-critical.

\begin{thm}\label{thm101}
Let $G$ be a graph of order $n$ with minimum degree $\delta\geq3r+1$, where $r\geq1$ is an integer.
If $n\geq {\rm max} \left\{20\delta+r+8, \delta^3-\frac{r-3}{2}\delta^2-\frac{r^2-2r-4}{2}\delta-\frac{r^2-3r-3}{2}\right\}$ and
$$\rho(G)\geq\rho(S_{\delta+r, \delta}\vee(K_{n-2\delta-r-1}+I_{\delta+1})),$$
then $G$ is ID-factor-critical unless $G\cong S_{\delta+r, \delta}\vee(K_{n-2\delta-r-1}+I_{\delta+1})$.
\end{thm}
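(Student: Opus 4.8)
The plan is to argue by contradiction through the combinatorial characterization in Theorem \ref{thm100}. Write $H=S_{\delta+r,\delta}\vee(K_{n-2\delta-r-1}+I_{\delta+1})$ for the conjectured extremal graph, and suppose $G$ satisfies the hypotheses but is not ID-factor-critical with $G\not\cong H$. By Theorem \ref{thm100} there exist an independent set $I$ with $|I|\equiv n\ (\mathrm{mod}\ 2)$ and a set $S\subseteq V(G)-I$ with $o(G-I-S)>|S|$. Since $|I|\equiv n$, the graph $G-I-S$ has $n-|I|-|S|\equiv|S|\ (\mathrm{mod}\ 2)$ vertices, so $o(G-I-S)\equiv|S|\ (\mathrm{mod}\ 2)$; combined with $o(G-I-S)>|S|$ this upgrades the deficiency to $o(G-I-S)\geq|S|+2$. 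Put $s=|S|$, $i=|I|$, and let $q=o(G-I-S)\geq s+2$.

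First I would pin down the parameters using the degree condition and the size of $n$. Each vertex lying in a singleton odd component of $G-I-S$ has all of its neighbours inside $I\cup S$, so $\delta\leq i+s$; more generally the minimum degree controls the number and the sizes of the small odd components. Together with the hypothesis $n\geq\max\{20\delta+r+8,\ \delta^{3}-\cdots\}$, these inequalities confine the triple $(s,i,q)$ to a bounded window and force all but one of the odd components to be small, the exceptional one absorbing essentially all of the $n$ vertices.

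Next, for fixed admissible $(s,i,q)$ I would pass to the edge-maximal graph carrying this obstruction: make $S$ a clique joined to all of $V(G)$, join the independent set $I$ completely to every component, turn each odd component into a clique, and merge all but the prescribed small odd components into a single large clique. Since adding edges cannot decrease the spectral radius and the Perron value is strictly monotone on connected graphs, $\rho(G)\leq\rho(G^{\ast})$ with equality only if $G=G^{\ast}$, where $G^{\ast}$ has the form $K_{s}\vee\bigl(I_{i}\vee(K_{m}+(q-1)K_{1})\bigr)$ with one big clique $K_{m}$ and $q-1$ singletons. The remaining task is to show that, over the admissible window for $(s,i,q)$ cut out in the previous step, $\rho(G^{\ast})$ is maximized uniquely by the choice realizing $H$ (namely $s=\delta$, $i=r$, $q=\delta+2$), every other admissible choice yielding $\rho(G^{\ast})<\rho(H)$.

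The heart of the proof, and the main obstacle, is this last comparison. For each competing configuration I would compute the Perron value of the associated equitable quotient matrix, a $3\times 3$ or $4\times 4$ matrix whose entries are affine in $n$, and compare it with $\rho(H)$, itself the largest root of the quotient matrix of $H$. Because $n$ is of cubic order in $\delta$, the spectral radius is governed to leading order by the size of the dominant clique together with the number of near-universal vertices; the delicate part is the lower-order comparison, where the hypotheses $\delta\geq 3r+1$ and the explicit cubic lower bound on $n$ are exactly what force the inequalities in the right direction and eliminate the competing configurations, including those with $s$ away from $\delta$ and those carrying extra small components. Once $\rho(G^{\ast})\leq\rho(H)$ is established with equality characterizing $G^{\ast}\cong H$, the assumption $\rho(G)\geq\rho(H)$ forces $G\cong H$, the desired contradiction.
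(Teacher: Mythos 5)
Your overall strategy coincides with the paper's: argue by contradiction via Theorem \ref{thm100}, upgrade the deficiency to $o(G-I-S)\geq s+2$ by parity, embed $G$ in an edge-maximal join of cliques and independent sets, and compare spectral radii with the extremal graph $H=S_{\delta+r,\delta}\vee(K_{n-2\delta-r-1}+I_{\delta+1})$. The genuine gap is in your comparison graph $G^{\ast}=K_{s}\vee\bigl(I_{i}\vee(K_{m}+(q-1)K_{1})\bigr)$: collapsing every small odd component to a singleton breaks the argument whenever $s+i$ is small. With $q=s+2$ your $G^{\ast}$ contains $K_{s}\vee(I_{1}\vee K_{m})=K_{n-s-i}$, so $\rho(G^{\ast})\geq n-s-i-1$, which already exceeds the threshold $n-\delta-r-1=\rho(K_{n-\delta-r})<\rho(H)$ that governs the comparison whenever $s+i<\delta+r$; concretely, for $s=0$ and $i=r$ one gets $G^{\ast}=I_{r}\vee(K_{n-r-1}+K_{1})\supseteq K_{n-r}$ and hence $\rho(G^{\ast})\geq n-r-1$, while $\rho(H)$ is only about $n-\delta-2$ for $n$ in the stated range (the $\delta+1$ vertices of $I_{\delta+1}$ in $H$ have degree only $\delta+r$), so the inequality $\rho(G^{\ast})<\rho(H)$ you need is simply false. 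You yourself observe that a singleton component forces $\delta\leq i+s$, but your $G^{\ast}$ discards exactly that constraint. The paper's Case 3 ($s<\delta$) keeps it: minimum degree forces every odd component to have at least $\delta-s-r+1$ vertices, so Lemma \ref{le33} is applied with blocks $(s+1)K_{\delta-s-r+1}$ rather than $(s+1)K_{1}$, which shrinks the dominant clique enough to give $\rho<n-r-1-(s+1)(\delta-s+1)<n-\delta-r-1<\rho(H)$; this is precisely where the cubic lower bound on $n$ is spent. Without retaining the component sizes your comparison step cannot be completed.

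Two secondary points. First, the admissible range of $s$ is not a ``bounded window'' --- $s$ can be as large as roughly $n/2$ --- and for $s\geq\delta+1$ (where singletons are legitimate and your $G^{\ast}$ agrees with the paper's $G''$) the comparison with $\rho(H)$ is not settled by a leading-order clique count; the paper runs a separate Perron-vector estimate there, and that is where $n\geq20\delta+r+8$ and $\delta\geq3r+1$ are consumed. So the two numerical hypotheses are used in two different cases, not in one uniform quotient-matrix computation. Second, you treat $|I|=i$ as a free parameter, which is more scrupulous than the paper (whose proof silently identifies $|I|$ with the theorem's $r$), but this means your case analysis must additionally control configurations with $i\neq r$, which your sketch does not address.
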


\section{Proof of Theorem \ref{main1}}

Before presenting our proof, we introduce some necessary lemmas.

\begin{lem}[Wei and Zhang \cite{Wei2023}]\label{le1}
Let $G$ be a graph of order $n\geq3$. If $e(G)\geq\binom{n-1}{2}+1$, then $G$ has a Hamilton path.
\end{lem}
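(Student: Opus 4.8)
The plan is to prove the contrapositive: if $G$ on $n\ge 3$ vertices has no Hamilton path, then $e(G)\le\binom{n-1}{2}$, which is exactly the negation of the hypothesis $e(G)\ge\binom{n-1}{2}+1$. So assume throughout that $G$ is traceability-free and aim for this edge bound.

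The key device I would use is to reduce traceability to Hamiltonicity by adjoining a universal vertex. Set $G^{+}=G\vee K_{1}$ and let $w$ denote the added vertex, so that $G^{+}$ has $m=n+1$ vertices and $e(G^{+})=e(G)+n$. A Hamilton path $v_{1}v_{2}\cdots v_{n}$ of $G$ corresponds to the Hamilton cycle $w v_{1}v_{2}\cdots v_{n}w$ of $G^{+}$, and conversely, deleting $w$ from any Hamilton cycle of $G^{+}$ leaves a Hamilton path of $G$ (its two endpoints being the former neighbours of $w$); hence $G$ is traceable if and only if $G^{+}$ is Hamiltonian. In particular, since $G$ has no Hamilton path, $G^{+}$ is non-Hamiltonian, and note that no connectivity case analysis on $G$ is needed because $G^{+}$ is always connected.

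Next I would invoke the classical edge bound for non-Hamiltonian graphs (Ore): every non-Hamiltonian graph on $m\ge 3$ vertices has at most $\binom{m-1}{2}+1$ edges. Applying this to $G^{+}$, where $m=n+1\ge 4$, gives $e(G)+n=e(G^{+})\le\binom{n}{2}+1$. Using $\binom{n}{2}=\binom{n-1}{2}+(n-1)$, this rearranges to $e(G)\le\binom{n-1}{2}$, which contradicts $e(G)\ge\binom{n-1}{2}+1$ and completes the contrapositive, hence the lemma.

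I expect the only genuine content to be the non-Hamiltonian edge bound invoked above, so the main issue is whether to cite it or to argue it directly. If a self-contained argument is preferred, I would instead run the standard rotation–extension argument on a longest path $P=v_{1}\cdots v_{k}$ of $G$ with $k<n$: maximality of $P$ forces all neighbours of $v_{1}$ and $v_{k}$ onto $P$, and the usual crossing obstruction shows that one cannot simultaneously have $v_{1}\sim v_{i+1}$ and $v_{k}\sim v_{i}$, since otherwise $v_{1}v_{i+1}v_{i+2}\cdots v_{k}v_{i}v_{i-1}\cdots v_{1}$ is a cycle through all of $v_{1},\dots,v_{k}$, and connectivity of $G$ would then yield a path longer than $P$. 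This gives the local endpoint bound $d_{G}(v_{1})+d_{G}(v_{k})\le k-1$. The main obstacle along that route is upgrading this local inequality to the global edge count $\binom{n-1}{2}$; the universal-vertex reduction is attractive precisely because it bypasses this upgrade, trading it for the already-clean edge threshold for Hamilton cycles.
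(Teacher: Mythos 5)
Your argument is correct, and it is worth noting that the paper does not actually prove this lemma: it is imported verbatim from Wei and Zhang \cite{Wei2023} as a black box, so you are supplying a proof where the paper supplies only a citation. Both of your ingredients check out. The cone equivalence is sound: a Hamilton cycle of $G^{+}=G\vee K_{1}$ must pass through the apex $w$, and deleting $w$ leaves a Hamilton path of $G$, while conversely any Hamilton path closes up through $w$; and since $m=n+1\ge 4$, the classical Ore--Erd\H{o}s bound (a non-Hamiltonian graph on $m\ge3$ vertices has at most $\binom{m-1}{2}+1$ edges, tight for $K_{m-1}$ plus a pendant edge) applies to $G^{+}$. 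The arithmetic is exact: $e(G)+n=e(G^{+})\le\binom{n}{2}+1$ gives $e(G)\le\binom{n-1}{2}$, which matches the extremal non-traceable graph $K_{n-1}+K_{1}$, so your reduction loses no slack and recovers the lemma with its sharp threshold. Your instinct to cite the non-Hamiltonian edge bound rather than rework the rotation--extension argument globally is the right call; if self-containment is wanted, that bound itself follows in two lines from Ore's theorem: a non-Hamiltonian graph on $m$ vertices has non-adjacent vertices $u,v$ with $d(u)+d(v)\le m-1$, and counting edges inside $V\setminus\{u,v\}$ plus those meeting $\{u,v\}$ gives $e\le\binom{m-2}{2}+(m-1)=\binom{m-1}{2}+1$. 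This is exactly the upgrade from the local endpoint inequality to a global edge count that you flagged as the obstacle in the direct route, and it confirms that the universal-vertex detour was the efficient choice.
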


Although the following Lemma \ref{le2} can be obtained directly from Theorem 2 in \cite{Wei2023},
here we can present a much simpler proof of Lemma \ref{le2} for a fractional $[a,b]$-factor.

\begin{lem}\label{le2}
Let $a$ and $b$ be two positive integers with $a\leq b$, and let $G$ be a graph of order $n\geq a+1$ and minimum degree $\delta\geq a$. If
$$e(G)\geq\binom{n-1}{2}+\frac{a+1}{2}$$
and $na\equiv0~(\rm{mod}~2)$ when $a=b$, then $G$ has a fractional $[a,b]$-factor.
\end{lem}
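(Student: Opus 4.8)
The plan is to argue by contradiction through Corollary~\ref{cor1}. Suppose $G$ has no fractional $[a,b]$-factor; then there is a set $S\subseteq V(G)$ with
$$b|S|-a|T|+\sum_{v\in T}d_{G-S}(v)\le -1,$$
where $T=\{v\in V(G)-S:d_{G-S}(v)<a\}$. First I would record two structural facts. If $S=\emptyset$ then $T=\emptyset$ because $\delta(G)\ge a$, and the left-hand side is $0$, a contradiction; hence $s:=|S|\ge 1$, and likewise $T\neq\emptyset$, so $t:=|T|\ge 1$. Since $bs\le at-1<at$ and $b\ge a$, we obtain $t>\tfrac{b}{a}s\ge s$, i.e. $t\ge s+1$.

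The core of the argument is an edge count. Writing $W=V(G)-S-T$ and $m=|W|=n-s-t\ge 0$, I would bound
$$e(G)\le\binom{s}{2}+s(n-s)+\binom{m}{2}+\Big(e(G[T])+e_G(T,W)\Big).$$
Since $e(G[T])+e_G(T,W)\le\sum_{v\in T}d_{G-S}(v)\le at-bs-1$ by the violating inequality, and $\binom{s}{2}+s(n-s)+\binom{m}{2}=\binom{n}{2}-\binom{t}{2}-t(n-s-t)$, this gives
$$e(G)\le\binom{n}{2}-Q(s,t)-1,\qquad Q(s,t):=\binom{t}{2}+t(n-s-t)-at+bs.$$
Thus it suffices to prove $Q(s,t)\ge n-\tfrac{a+3}{2}$, since then $e(G)\le\binom{n-1}{2}+\tfrac{a-1}{2}<\binom{n-1}{2}+\tfrac{a+1}{2}$, contradicting the hypothesis.

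Establishing $Q(s,t)\ge n-\tfrac{a+3}{2}$ is where the real work lies and will be the main obstacle. As a function of $t$, $Q$ is a downward parabola, so on the feasible range $s+1\le t\le n-s$ its minimum occurs at an endpoint; I would therefore reduce to checking $t=s+1$ and $t=n-s$, each of which becomes a one-variable polynomial inequality in $s$ (with parameter $n$). To control the delicate endpoint $t=s+1$ I would additionally exploit the minimum-degree hypothesis: for $v\in T$ we have $d_{G-S}(v)\ge d_G(v)-s\ge\delta-s\ge a-s$, so whenever $s\le a$ the violating inequality forces $t(a-s)\le at-bs-1$, that is $t\ge b+1$. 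This pushes $t$ away from the small value $s+1=2$ unless $a=b=1$, and the remaining endpoint computations then go through for every $n\ge a+1$ outside a short list of small pairs $(n,a)$; for each of those the edge hypothesis already forces $G$ into finitely many near-complete graphs that one checks directly to contain a fractional $[a,b]$-factor.

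Finally I would dispatch the genuinely exceptional case $a=b=1$ directly. Here the parity hypothesis gives $n$ even, and $e(G)\ge\binom{n-1}{2}+1$ lets me apply Lemma~\ref{le1} to produce a Hamilton path; alternating edges of this path form a perfect matching of $G$, hence a fractional perfect matching. This step is exactly where the parity assumption is indispensable: it rules out configurations such as $P_3$, which satisfies every other hypothesis but has no fractional perfect matching. I expect the optimization of $Q(s,t)$ together with the bookkeeping for the few small $(n,a)$ to be the hardest and most technical part, with the minimum-degree refinement $t\ge b+1$ serving as the key device that keeps the case analysis finite.
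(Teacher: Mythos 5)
Your skeleton matches the paper's: contradiction via Corollary~\ref{cor1}, the observation that $S\neq\emptyset$, the reduction to $|T|\geq b+1$ from the minimum degree, and the separate treatment of $a=b=1$ via Lemma~\ref{le1} and the parity hypothesis. The gap is in the edge count. Writing $\sum_{v\in T}d_{G-S}(v)=2e(G[T])+e_G(T,W)$, your step $e(G[T])+e_G(T,W)\le\sum_{v\in T}d_{G-S}(v)$ throws away $e(G[T])$ once, and when $W=V(G)-S-T$ is empty this loses a full factor of $2$. As a result the target inequality $Q(s,t)\ge n-\tfrac{a+3}{2}$ is false on a substantial part of the feasible region, not just for a short list of small cases. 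Concretely, for $a=b\ge 2$ and $n=a+2$ the only feasible configuration is $s=1$, $t=a+1=n-s$, $m=0$, where $Q(1,a+1)=\tfrac{a(1-a)}{2}<0$ while you need $Q\ge\tfrac{a+1}{2}$; your bound then yields only $e(G)\le\binom{n}{2}-Q-1$, which does not contradict the hypothesis. More generally the inequality fails at the endpoint $t$ near $n-s$ whenever $n\lesssim 2a$, so the exceptional set is an infinite family of pairs $(n,a)$ parametrized by $a$, and the promised "direct check" (e.g., showing that $K_{a+2}$ minus a near-perfect matching has a fractional $[a,a]$-factor, for every $a$) is an undone infinite-family argument. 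Since the entire point of this lemma, as opposed to Theorem~\ref{th24}, is to cover all $n\ge a+1$, this regime cannot be dismissed.

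The paper avoids the loss by counting missing edges: $e(G)\ge\binom{n-1}{2}+\tfrac{a+1}{2}$ means at most $n-1-\tfrac{a+1}{2}$ edges of $K_n$ are absent, and each absent edge lowers $\sum_{v\in T}d_{G-S}(v)$ by at most $2$, giving $\sum_{v\in T}d_{G-S}(v)\ge(n-1-|S|)|T|-2\left(n-1-\tfrac{a+1}{2}\right)$. This accounts correctly for the double counting inside $T$ and is exactly the sharpening your partition discards. If you instead bound $e(G[T])+e_G(T,W)\le\tfrac{1}{2}\sum_{v\in T}d_{G-S}(v)+\tfrac{1}{2}e_G(T,W)\le\tfrac{1}{2}(at-bs-1)+\tfrac{1}{2}tm$, your deficiency term becomes identical to the paper's, and the rest of your plan (concavity in $t$, endpoint analysis, plus the integrality refinement $e(G)\ge\binom{n-1}{2}+\tfrac{a+2}{2}$ when $a=b$ is even and $n\ge a+3$ when $a=b$ is odd, which the paper also needs) goes through with no exceptional cases.
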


\begin{proof}
For any two disjoint vertex subsets $S$ and $T$ in $G$, let
$$\varphi(S,T)=b|S|-a|T|+\sum_{v\in T}d_{G-S}(v).$$
 Suppose to the contrary that $G$ has no fractional $[a,b]$-factor. By Corollary \ref{cor1},
 there exist two disjoint subsets $S$ and $T$ of $V(G)$ such that
\begin{flalign}\label{eq1}
&&\varphi(S,T)\leq -1,&&
\end{flalign}
where $T=\{v|v\in V(G)-S~\mbox{and}~d_{G-S}(v)<a\}$.

\begin{claim}\label{claim1}
$n\geq a+2$ and $b\geq2.$
\end{claim}

\begin{proof}
Note that $\delta\geq a$. If $n=a+1$, then $G$ is a complete graph. It is well known that the complete graph contains an $[a,b]$-factor,
and hence $G$ contains a fractional $[a,b]$-factor, a contradiction. So we have $n\geq a+2$.

If $b=1$, then $a=b=1$, and thus $e(G)\geq\binom{n-1}{2}+1$.
By Lemma \ref{le1}, $G$ has a Hamilton path.
Note that $n$ is even. Then $G$ contains a $1$-factor, and hence $G$ contains a fractional $1$-factor, a contradiction.
Hence $b\geq2.$
\end{proof}

\begin{claim}\label{claim2}
$S\neq\emptyset$
\end{claim}
\begin{proof}
Assume that $S=\emptyset$.
Note that $G-S=G$ and $\delta(G)\geq a$. Then $\delta(G-S)\geq a$. Recall that $T=\{v|v\in V(G)-S~\mbox{and}~d_{G-S}(v)<a\}$.
Then $T=\emptyset$, and thus $$\varphi(\emptyset,\emptyset)=0,$$
which is contrary to (\ref{eq1}).
\end{proof}

Next we will evaluate the value of $|T|$.

\vspace{1.5mm}
\noindent\textbf{Case 1.} $0\leq|T|\leq b$.
\vspace{1mm}

Note that $\delta\geq a$. Then
\begin{eqnarray*}
\varphi(S,T) &=&b|S|-a|T|+\sum_{v\in T}d_{G-S}(v)\\
&=&b|S|-a|T|+\sum_{v\in T}d_{G}(v)-e_G(S,T)\\
&\geq&b|S|-a|T|+a|T|-|T||S|\\
&=&(b-|T|)|S|\\
&\geq&0,
\end{eqnarray*}
which contradicts (\ref{eq1}).

\vspace{1.5mm}
\noindent\textbf{Case 2.} $|T|\geq b+1$.
\vspace{1mm}

Since $S$ and $T$ are two disjoint subsets of $V(G)$, $n\geq|S|+|T|\geq|S|+b+1.$
By the assumption $e(G)\geq\binom{n-1}{2}+\frac{a+1}{2}$, there exist at most $n-1-\frac{a+1}{2}$ edges which are not in $E[V(G-T-S),T]\cup E(G[T])$.
Hence
\begin{eqnarray}
\sum_{v\in T}d_{G-S}(v)\geq(n-1-|S|)|T|-2(n-1-\frac{a+1}{2}).\nonumber
\end{eqnarray}

\vspace{1.5mm}
\noindent\textbf{Subcase 2.1.} $a<b$.
\vspace{1mm}

Combining Claim \ref{claim1}, we have
\begin{eqnarray*}
\varphi(S,T)
&=& b|S|-a|T|+\sum_{v\in T}d_{G-S}(v)\\
&\geq& b|S|-a|T|+(n-1-|S|)|T|-2(n-1-\frac{a+1}{2})\\
&=& (n-1-|S|-a)|T|+b|S|-2n+a+3\\
&\geq& (n-1-|S|-a)(b+1)+b|S|-2n+a+3\\
&=& (b-2)n+n-|S|-ab-b+2\\
&\geq& (b-2)n+(|S|+b+1)-|S|-ab-b+2\\
&=& (b-2)n-ab+3\\
&\geq& (b-2)(a+2)-ab+3\\
&=& 2b-2a-1\\
&\geq& 1,
\end{eqnarray*}
which is contrary to (\ref{eq1}).

\vspace{1.5mm}
\noindent\textbf{Subcase 2.2.} $a=b$.
\vspace{1mm}

Recall that $n\geq|S|+b+1=|S|+a+1$ and $na \equiv0~(\rm{mod}~2)$. If $a$ is odd, then $n$ is even.
By Claim \ref{claim1}, we have $n\geq a+3$ and $a\geq3$.
Then
\begin{eqnarray}
\varphi(S,T)\nonumber
&=&a|S|-a|T|+\sum_{v\in T}d_{G-S}(v)\\ \nonumber
&\geq&a|S|-a|T|+(n-1-|S|)|T|-2(n-1-\frac{a+1}{2})\\ \nonumber
&=&(n-1-|S|-a)|T|+a|S|-2n+a+3\\ \nonumber
&\geq&(n-1-|S|-a)(a+1)+a|S|-2n+a+3\\ \nonumber
&=&(a-2)n+n-|S|-a^2-a+2\\ \nonumber
&\geq&(a-2)n+(|S|+a+1)-|S|-a^2-a+2\\ \nonumber
&=&(a-2)n-a^2+3\\ \nonumber
&\geq&(a-2)(a+3)-a^2+3\\ \nonumber
&=&a-3.\\ \nonumber
&\geq&0,
\end{eqnarray}
a contradiction.

Next we consider that $a$ is even. Since $e(G)\geq\binom{n-1}{2}+\frac{a+1}{2}$, we obtain that $e(G)\geq\binom{n-1}{2}+\frac{a+2}{2}$,
and hence $\sum_{v\in T}d_{G-S}(v)\geq(n-1-|S|)|T|-2(n-1-\frac{a+2}{2}).$
By Claim \ref{claim1}, we have $n\geq a+2$. Then
\begin{eqnarray}
\varphi(S,T)\nonumber
&=& a|S|-a|T|+\sum_{v\in T}d_{G-S}(v)\\ \nonumber
&\geq& a|S|-a|T|+(n-1-|S|)|T|-2(n-1-\frac{a+2}{2})\\ \nonumber
&=& (n-1-|S|-a)|T|+a|S|-2n+a+4\\ \nonumber
&\geq& (n-1-|S|-a)(a+1)+a|S|-2n+a+4\\ \nonumber
&=& (a-2)n+n-|S|-a^2-a+3\\ \nonumber
&\geq& (a-2)n+(|S|+a+1)-|S|-a^2-a+3\\ \nonumber
&=& (a-2)n-a^2+4\\ \nonumber
&\geq& (a-2)(a+2)-a^2+4\\ \nonumber
&=& 0,
\end{eqnarray}
which contradicts (\ref{eq1}).
\end{proof}

Let $A=(a_{ij})$ and $B=(b_{ij})$ be two $n\times n$ matrices.
Define $A\leq B$ if $a_{ij}\leq b_{ij}$ for all $i$ and $j$, and define $A< B$ if $A\leq B$ and $A\neq B$.

\begin{lem}[Berman and Plemmons \cite{Berman1979}, Horn and Johnson \cite{Horn1986}]\label{le4}
Let $A=(a_{ij})$ and $B=(b_{ij})$ be two $n\times n$ matrices with the spectral radii $\lambda(A)$ and $\lambda(B)$.
If $0\leq A\leq B$, then $\lambda(A)\leq \lambda(B)$.
Furthermore, if $B$ is irreducible and $0\leq A < B$, then $\lambda(A)<\lambda(B)$.
\end{lem}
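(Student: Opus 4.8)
The plan is to derive both assertions from the Perron--Frobenius theory of nonnegative matrices, handling the weak inequality by an elementary power/norm argument and the strict inequality by a Perron eigenvector pairing combined with a support analysis.

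For the first assertion I would exploit that entrywise monotonicity is preserved under multiplication: from $0\le A\le B$ an immediate induction gives $0\le A^{k}\le B^{k}$ for every $k\ge 1$, since each entry of a product of nonnegative matrices is a sum of nonnegative terms, each dominated by the corresponding term computed from $B$. Applying any entrywise-monotone matrix norm (for instance the sum of the absolute values of all entries) yields $\|A^{k}\|\le\|B^{k}\|$, and Gelfand's formula $\lambda(M)=\lim_{k\to\infty}\|M^{k}\|^{1/k}$ then gives $\lambda(A)\le\lambda(B)$ directly, requiring no irreducibility hypothesis.

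For the strict assertion, assume $B$ is irreducible and $0\le A<B$, and suppose for contradiction that $\lambda(A)=\lambda(B)$. By the Perron--Frobenius theorem the irreducible matrix $B$ has a strictly positive right eigenvector $x>0$ with $Bx=\lambda(B)x$, while the (possibly reducible) nonnegative matrix $A$ admits a nonnegative nonzero left eigenvector $y\ge 0$, $y\ne 0$ (apply Perron--Frobenius to $A^{\top}$), with $y^{\top}A=\lambda(A)y^{\top}$. Pairing these gives $\lambda(A)\,y^{\top}x=y^{\top}Ax\le y^{\top}Bx=\lambda(B)\,y^{\top}x$, and since $y^{\top}x>0$ this re-proves $\lambda(A)\le\lambda(B)$; the assumed equality forces $y^{\top}(B-A)x=0$. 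Because $B-A\ge 0$, $y\ge 0$ and $x>0$, every summand $y_{i}(B-A)_{ij}x_{j}$ vanishes, so $y_{i}(B-A)_{ij}=0$ for all $i,j$.

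The crux, which I expect to be the main obstacle, is to convert these vanishing conditions into a contradiction with the irreducibility of $B$. Set $Z=\{i:y_{i}=0\}$; then $Z\ne\emptyset$, because $A\ne B$ forces $(B-A)_{pq}>0$ for some $p,q$ and hence $y_{p}=0$, while $Z^{c}\ne\emptyset$ because $y\ne 0$. For $i\in Z^{c}$ we have $y_{i}>0$, so $(B-A)_{ij}=0$, that is, rows $i$ of $A$ and $B$ coincide; and reading the left-eigenvector equation of $A$ in a coordinate $j\in Z$ gives $\sum_{i}y_{i}A_{ij}=\lambda(A)y_{j}=0$, whence $A_{ij}=0$ for every $i\in Z^{c}$. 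Combining these, $B_{ij}=A_{ij}=0$ for all $i\in Z^{c}$ and $j\in Z$, so the associated digraph of $B$ has no arc from $Z^{c}$ into $Z$ while both parts are nonempty; no walk starting in $Z^{c}$ can then reach $Z$, contradicting the strong connectivity that is equivalent to the irreducibility of $B$. This contradiction yields $\lambda(A)<\lambda(B)$. The only points demanding care are the deliberate use of the \emph{left} eigenvector of $A$ (to exploit the zero coordinates of $y$) together with the \emph{right positive} eigenvector of $B$ (to exploit $x>0$), and the standard equivalence between irreducibility and strong connectivity of the digraph.
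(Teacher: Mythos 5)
Your proof is correct. Note, however, that the paper does not prove this lemma at all: it is quoted verbatim from the textbooks of Berman--Plemmons and Horn--Johnson, so there is no in-paper argument to compare against; the relevant comparison is with the standard reference proof. Your first part (entrywise induction $0\le A^{k}\le B^{k}$ plus Gelfand's formula) is exactly the usual argument. For the strict part, your pairing of a nonnegative \emph{left} eigenvector $y$ of $A$ with the positive right Perron vector $x$ of $B$, followed by the support set $Z=\{i: y_{i}=0\}$ and the digraph cut $Z^{c}\to Z$ contradicting strong connectivity, is sound: the equality $y^{\top}(B-A)x=0$ with $x>0$ does force $y_{i}(B-A)_{ij}=0$, your deductions that $B_{ij}=A_{ij}=0$ for $i\in Z^{c}$, $j\in Z$ are valid, and both $Z\ne\emptyset$ and $Z^{c}\ne\emptyset$ hold as you argue (for $n=1$ this pair of claims is already contradictory, which still closes the proof). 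The textbook route is shorter and avoids the support analysis entirely: take a nonnegative right eigenvector $u\ge 0$, $u\ne 0$, with $Au=\lambda(A)u$ and the \emph{positive left} Perron vector $w>0$ of the irreducible $B$; if $\lambda(A)=\lambda(B)=\rho$ then $w^{\top}(B-A)u=\rho\,w^{\top}u-\rho\,w^{\top}u=0$, and since $w>0$ and $(B-A)u\ge 0$ this gives $Bu=\rho u$, whence by Perron--Frobenius uniqueness $u>0$, and then $(B-A)u=0$ with $u>0$ forces $A=B$, a contradiction. Your version costs an extra combinatorial step (the equivalence of irreducibility with strong connectivity) but in exchange never invokes uniqueness of the Perron eigenvector, so both are legitimate; nothing in your argument needs repair.
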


We will use the following lemma in the proof of Theorem \ref{main1}.

\begin{lem}[Hong, Shu and Fang \cite{Hong2001}, Nikiforov \cite{Nikiforov2002}]\label{le3}
Let $G$ be a graph with minimum degree $\delta.$ Then
$$\rho(G)\leq \frac{\delta-1}{2}+\sqrt{2e(G)-\delta n+\frac{(\delta+1)^{2}}{4}}.$$
\end{lem}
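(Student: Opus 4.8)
The plan is to reduce the stated bound to an equivalent quadratic inequality in $\rho(G)$ and then to establish that inequality through the Perron eigenvector. Writing $\rho=\rho(G)$, $\delta=\delta(G)$ and $m=e(G)$, I would first record that the handshake identity $2m=\sum_{v}d_G(v)\ge\delta n$ makes the radicand $2m-\delta n+\frac{(\delta+1)^2}{4}$ positive and that $\rho\ge\frac{2m}{n}\ge\delta>\frac{\delta-1}{2}$ by the Rayleigh quotient of the all-ones vector. Hence the claimed inequality, after moving $\frac{\delta-1}{2}$ to the left, may be squared; expanding $\left(\rho-\frac{\delta-1}{2}\right)^2$ and using $\frac{(\delta+1)^2}{4}-\frac{(\delta-1)^2}{4}=\delta$ shows that it is equivalent to
$$\rho^2-(\delta-1)\rho\le 2m-\delta(n-1).$$
This is the form I would actually prove. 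It has the useful feature of holding with equality for every $\delta$-regular graph (where $\rho=\delta$ and $2m=\delta n$), which already signals that any eventual estimate must be sharp exactly at regularity.

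Next I would pass to the eigenvector. A short monotonicity argument, using $\rho(H)\le n_H-1$ for the component $H$ attaining the spectral radius together with $\delta(H)\ge\delta$, reduces the statement to connected $G$, so that the Perron eigenvector $\mathbf{x}$ is strictly positive; I normalize $\sum_v x_v^2=1$. The template is Hong's bound $\rho^2\le 2m-n+1$, which is the case $\delta=1$: from $\rho x_u=\sum_{v\sim u}x_v$ and Cauchy--Schwarz one gets $\rho^2=\sum_u\big(\sum_{v\sim u}x_v\big)^2\le\sum_u d_G(u)\sum_{v\sim u}x_v^2=\sum_v x_v^2\sum_{u\sim v}d_G(u)$, after which the heart of the matter is a purely combinatorial estimate on the inner degree sum. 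Here I would invoke $d_G\ge\delta$ on the vertices \emph{not} adjacent to $v$: since there are $n-1-d_G(v)$ of them, $\sum_{u\sim v}d_G(u)=2m-d_G(v)-\sum_{u\ne v,\,u\not\sim v}d_G(u)\le 2m-d_G(v)-\delta\big(n-1-d_G(v)\big)=2m-\delta(n-1)+(\delta-1)d_G(v)$, the natural $\delta$-refinement of Hong's $2m-n+1$. Substituting yields $\rho^2\le\big(2m-\delta(n-1)\big)+(\delta-1)\sum_v d_G(v)x_v^2$.

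The whole difficulty now concentrates in the leftover term $(\delta-1)\sum_v d_G(v)x_v^2$: to match the target I need $\sum_v d_G(v)x_v^2\le\rho$. This is precisely the obstacle, and it is where a blind application of Cauchy--Schwarz is too wasteful, since positive semidefiniteness of the Laplacian gives the \emph{reverse} inequality $\sum_v d_G(v)x_v^2\ge\mathbf{x}^{\top}A(G)\mathbf{x}=\rho$. The resolution I would pursue is to avoid squaring the neighbourhood sum outright and instead exploit the monotone structure of the Perron entries: re-indexing so that $x_1\ge\cdots\ge x_n$, one estimates $\sum_{v\sim u}x_v$ against the $d_G(u)$ largest admissible coordinates, which allows a linear-in-$\rho$ correction to be carried through the count of length-two walks rather than discarded. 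The point is to charge the minimum-degree deficit on non-neighbours against $(\delta-1)\rho$ with the correct sign, so that the final estimate produces the term $(\delta-1)\rho$ rather than the larger $(\delta-1)\sum_v d_G(v)x_v^2$. I expect this balancing, tight exactly when the graph is regular, to be the crux; once it is in place the quadratic inequality follows, and tracing the equality case back forces a regular graph, completing the lemma.
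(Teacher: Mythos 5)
The paper does not prove this lemma at all --- it is quoted from Hong--Shu--Fang and Nikiforov --- so your attempt can only be judged on its own merits, and on those merits it is incomplete: it stops exactly at the step you yourself flag as ``the crux.'' Everything up to that point is sound: the reduction to the equivalent quadratic $\rho^2-(\delta-1)\rho\le 2e(G)-\delta(n-1)$, the reduction to connected graphs, and the degree estimate $\sum_{u\sim v}d_G(u)\le 2e(G)-\delta(n-1)+(\delta-1)d_G(v)$ are all correct. But after Cauchy--Schwarz you need $\sum_v d_G(v)x_v^2\le\rho$, which, as you correctly observe, is false in the required direction (positive semidefiniteness of the Laplacian gives $\sum_v d_G(v)x_v^2\ge\rho$), and your proposed repair --- estimating $\sum_{v\sim u}x_v$ against the $d_G(u)$ largest coordinates so that the non-neighbour deficit is ``charged against $(\delta-1)\rho$'' --- is never formulated as a concrete inequality, let alone proved. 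As written this is a research plan, not a proof. A further warning sign that the envisioned balancing cannot work as described: your closing claim that equality forces regularity is false. The star $K_{1,n-1}$, and more generally $K_k\vee I_{n-k}$ with $\delta=k$, attain equality, so any correct final estimate must be tight on these bidegreed graphs too, not only at regular ones.

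The standard way to close the gap keeps your combinatorial estimate but discards the $x_v^2$-weighting entirely. By the Collatz--Wielandt (Rayleigh quotient) bound, for any positive vector $y$ one has $\rho(G)\le\max_v (Ay)_v/y_v$. Take $y=d+t\mathbf{1}$, where $d$ is the degree vector and $t>0$; then $(Ay)_v=\sum_{u\sim v}d_G(u)+t\,d_G(v)\le 2e(G)-\delta(n-1)+(\delta-1+t)\,d_G(v)$ by your estimate, so
\begin{equation*}
\rho(G)\;\le\;\max_v\frac{2e(G)-\delta(n-1)+(\delta-1+t)\,d_G(v)}{d_G(v)+t}.
\end{equation*}
Choosing $t$ to be the positive root of $t^2+(\delta-1)t=2e(G)-\delta(n-1)$ (which exists since $2e(G)\ge\delta n$ makes the right-hand side at least $\delta>0$) renders the ratio identically equal to $\delta-1+t$, so the dependence on $d_G(v)$ cancels and $\rho(G)\le\delta-1+t=\frac{\delta-1}{2}+\sqrt{\frac{(\delta-1)^2}{4}+2e(G)-\delta(n-1)}$, which is exactly the stated bound after simplifying the radicand. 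This division by $d_G(v)+t$, rather than a weighting by Perron entries, is the missing mechanism that converts the troublesome $(\delta-1)d_G(v)$ term into $(\delta-1)\rho$; note it also needs no connectivity reduction, since $Ay\le cy$ entrywise already forces $\rho\le c$ for any nonnegative matrix.
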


\begin{pro}[Hong, Shu and Fang \cite{Hong2001}, Nikiforov \cite{Nikiforov2002}]\label{pro}
For graph $G$ with $2e(G)\leq n(n-1),$ the function
$$f(x)=\frac{x-1}{2}+\sqrt{2e(G)-nx+\frac{(x+1)^{2}}{4}}$$
is decreasing with respect to $x$ for $0\leq x\leq n-1.$
\end{pro}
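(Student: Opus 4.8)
The plan is to treat $n$ and $e(G)$ as fixed constants and prove the statement analytically, by showing that the derivative $f'(x)$ is non-positive on the interval. Writing the radicand as $g(x)=2e(G)-nx+\frac{(x+1)^2}{4}$, so that $f(x)=\frac{x-1}{2}+\sqrt{g(x)}$, I would first differentiate to obtain
$$f'(x)=\frac12+\frac{g'(x)}{2\sqrt{g(x)}}=\frac12+\frac{\frac{x+1}{2}-n}{2\sqrt{g(x)}},$$
valid on the part of $[0,n-1]$ where $g(x)>0$. I note in passing that $g(0)=2e(G)+\frac14>0$ and that $g$ is itself decreasing on $[0,n-1]$ (its vertex is at $x=2n-1>n-1$), so $f$ is real and differentiable on the range relevant to the applications, where $x=\delta$ and $g(\delta)\ge\frac{(\delta+1)^2}{4}>0$ since $2e(G)\ge\delta n$ for any graph of minimum degree $\delta$.

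Next I would convert the sign condition on $f'$ into a cleaner form. Since $g(x)>0$, we have $f'(x)\le 0$ if and only if $\sqrt{g(x)}\le n-\frac{x+1}{2}$. The crucial preliminary check is that the right-hand side is non-negative throughout $[0,n-1]$: from $x\le n-1$ we get $\frac{x+1}{2}\le\frac n2$, hence $n-\frac{x+1}{2}\ge\frac n2\ge 0$. This is exactly the sign condition that permits squaring both sides of the inequality without reversing its direction.

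Finally, squaring the non-negative inequality $\sqrt{g(x)}\le n-\frac{x+1}{2}$ yields $g(x)\le\bigl(n-\frac{x+1}{2}\bigr)^2$, and upon expanding both sides the quadratic term $\frac{(x+1)^2}{4}$ cancels and the linear terms $-nx$ cancel, leaving the $x$-free inequality $2e(G)\le n(n-1)$. Since this is precisely the hypothesis, $f'(x)\le 0$ holds for every admissible $x\in[0,n-1]$, so $f$ is decreasing; moreover the inequality is strict whenever $2e(G)<n(n-1)$, so $f$ is strictly decreasing except in the degenerate complete-graph case $2e(G)=n(n-1)$, where a direct substitution gives $f\equiv n-1$. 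The only genuinely delicate step is this sign and domain bookkeeping — ensuring $g(x)>0$ so that $f'$ is defined, and $n-\frac{x+1}{2}\ge 0$ so that squaring is legitimate; once these are settled, the striking feature is that all dependence on $x$ disappears and the monotonicity collapses to the single hypothesis $2e(G)\le n(n-1)$.
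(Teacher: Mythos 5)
Your proof is correct. Note that the paper contains no proof of this proposition at all---it is imported verbatim from Hong, Shu and Fang \cite{Hong2001} and Nikiforov \cite{Nikiforov2002}---so there is no internal argument to compare against; your differentiation argument is the standard one from those sources, and your computation is right: $f'(x)\le 0$ reduces, after multiplying by $2\sqrt{g(x)}>0$ and squaring the nonnegative inequality $\sqrt{g(x)}\le n-\frac{x+1}{2}$, to exactly $2e(G)\le n(n-1)$, with both $\frac{(x+1)^2}{4}$ and $-nx$ cancelling. The bookkeeping you flag is indeed the only delicate content, and you handle it properly: $n-\frac{x+1}{2}\ge\frac{n}{2}\ge 0$ on $[0,n-1]$ legitimizes the squaring, and since $g$ has its vertex at $x=2n-1>n-1$ it is decreasing on the interval, so the set where $f$ is real is an initial segment of $[0,n-1]$ on which $f$ is monotone; in the paper's two applications $x=\delta$ and $2e(G)\ge\delta n$ gives $g(\delta)\ge\frac{(\delta+1)^2}{4}>0$, so the proposition is applied only where $f$ is defined. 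Your closing observation that equality $2e(G)=n(n-1)$ forces $f\equiv n-1$ correctly explains why ``decreasing'' must be read in the weak (non-increasing) sense and why the hypothesis is sharp.
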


\medskip
\noindent  \textbf{Proof of Theorem \ref{main1}.} Let $G$ be a graph of order $n\geq a+1$.
Note that the minimum degree of $K_{a-1}\vee (K_{n-a}+K_1)$ is $a-1$.
Let $h: E(G)\rightarrow [0,1]$ be a function. Then for $v\in V(K_1)$, we have $\sum_{e\in E_{G}(v)}h(e)\leq a-1$.
By the definition of a fractional $[a, b]$-factor, then $K_{a-1}\vee (K_{n-a}+K_1)$ has no fractional $[a, b]$-factor.
Assume that $G\ncong K_{a-1}\vee (K_{n-a}+K_1)$ (see Fig. \ref{f1}). It suffices to prove that $G$ contains a fractional $[a, b]$-factor.
First we prove the following claim.

\begin{figure}
\centering
\includegraphics[width=0.40\textwidth]{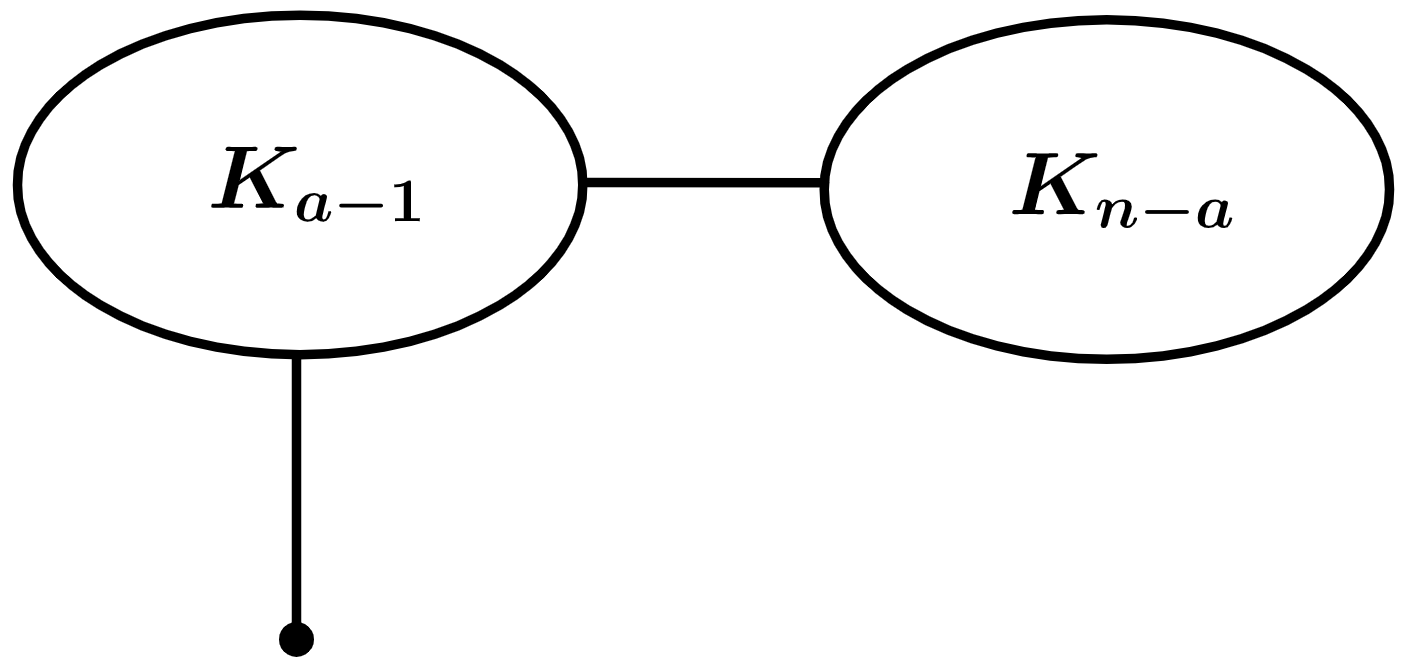}\\
\caption{Graph $K_{a-1}\vee (K_{n-a}+K_{1}).$
}\label{f1}
\end{figure}

\begin{claim}\label{cla33}
$\delta\geq a$.
\end{claim}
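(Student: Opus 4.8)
The plan is to argue by contradiction. Suppose $\delta=\delta(G)\le a-1$ and let $v$ be a vertex of $G$ with $d_G(v)=\delta$. Writing $H=K_{a-1}\vee(K_{n-a}+K_1)$, the strategy is to show that $G$ is isomorphic to a spanning subgraph of $H$, and then to invoke Lemma \ref{le4} to contradict the hypothesis $\rho(G)\ge\rho(H)$. The point is that $H$ is essentially the densest $n$-vertex graph possessing a vertex of degree at most $a-1$, so any such $G$ should embed into it.

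To build the embedding, denote by $u_0$ the unique vertex of the factor $K_1$ in $H$ (it has degree exactly $a-1$), by $U_1$ the $a-1$ vertices of the $K_{a-1}$, and by $U_2$ the $n-a$ vertices of the $K_{n-a}$. Observe that $H[U_1\cup U_2]\cong K_{n-1}$ is a clique and that $u_0$ is adjacent precisely to $U_1$. I would send $v$ to $u_0$, map the $\delta\le a-1$ neighbours of $v$ injectively into $U_1$, and send the remaining vertices of $G$ injectively onto the unused vertices of $U_1\cup U_2$; these number exactly $(a-1-\delta)+(n-a)=n-1-\delta$, matching the $n-1-\delta$ remaining vertices of $G$, so the map is a bijection (this counting uses only $\delta\le a-1$ and $n\ge a+1$). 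Every edge of $G$ incident with $v$ then lands between $u_0$ and $U_1$, hence is an edge of $H$; every other edge of $G$ lands inside $U_1\cup U_2$ and is an edge of $H$ because that set induces a clique. Thus the bijection identifies $G$ with a spanning subgraph of $H$.

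Since $G\ncong H$, this spanning subgraph is proper, so after relabelling the vertices we have $0\le A(G)<A(H)$. For $a\ge2$ the graph $H$ is connected, so $A(H)$ is irreducible, and the strict part of Lemma \ref{le4} gives $\rho(G)<\rho(H)$, contradicting $\rho(G)\ge\rho(H)$. Hence $\delta\ge a$.

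The step I expect to require the most care is the boundary behaviour rather than the embedding itself. When $a=1$ the matrix $A(H)$ is reducible, since $H\cong K_{n-1}+K_1$ has an isolated vertex, and the strict half of Lemma \ref{le4} cannot be applied to $H$ directly. In that case $\delta=0$ forces an isolated vertex of $G$, and I would instead apply the strict inequality on the connected clique $K_{n-1}$, concluding $\rho(G)=\rho(G-v)<n-2=\rho(H)$ whenever $G\ncong H$. Beyond this, the only thing to verify is that the proposed map truly sends every edge of $G$ to an edge of $H$, which reduces to the two clean observations that $u_0$ is joined to all of $U_1$ and that $U_1\cup U_2$ is complete.
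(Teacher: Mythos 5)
Your proof is correct and follows essentially the same route as the paper: a vertex of degree at most $a-1$ forces $G$ to be a spanning subgraph of $K_{a-1}\vee(K_{n-a}+K_1)$, and Lemma \ref{le4} together with the hypothesis $\rho(G)\geq\rho(K_{a-1}\vee(K_{n-a}+K_1))$ and $G\ncong K_{a-1}\vee(K_{n-a}+K_1)$ yields the contradiction. Your explicit embedding and your separate handling of the reducible case $a=1$ are more careful than the paper's two-line version, but the underlying argument is the same.
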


\begin{proof}
If $\delta\leq a-1$, then there exists a vertex $v\in V(G)$ such that $d(v)\leq a-1$.
This means that $G$ is a subgraph of $K_{a-1}\vee (K_{n-a}+K_1)$.
By Lemma \ref{le4}, we have $$\rho(G)\leq\rho(K_{a-1}\vee (K_{n-a}+K_1)).$$
By the assumption $\rho(G)\geq\rho(K_{a-1}\vee (K_{n-a}+K_1))$, we have $G\cong K_{a-1}\vee (K_{n-a}+K_1)$, a contradiction.
Hence $\delta\geq a$.
\end{proof}

We distinguish the proof into the following two cases.

\vspace{1.5mm}
\noindent\textbf{Case 1.} $a=1$.
\vspace{1mm}

By the assumption, we have $\rho(G)\geq \rho(K_{a-1}\vee (K_{n-a}+K_1))= \rho(K_{n-1}+K_1)=n-2$.
By Claim \ref{cla33}, Lemma \ref{le3} and Proposition \ref{pro}, we obtain that
$$n-2\leq\rho(G)\leq \sqrt{2e(G)-n+1}.$$
It follows that $e(G)\geq\binom{n-1}{2}+\frac{1}{2}$, and hence $e(G)\geq\binom{n-1}{2}+1$.
By Lemma \ref{le2}, then $G$ contains a fractional $[a,b]$-factor.

\vspace{1.5mm}
\noindent\textbf{Case 2.} $a\geq 2$.
\vspace{1mm}

Note that $K_{n-1}$ is a proper subgraph of $K_{a-1}\vee (K_{n-a}+K_1)$.
By the assumption and Lemma \ref{le4}, we have $\rho(G)\geq\rho(K_{a-1}\vee (K_{n-a}+K_1))>\rho(K_{n-1})=n-2$.
By Claim \ref{cla33}, Lemma \ref{le3} and Proposition \ref{pro}, we have
$$n-2<\rho(G)\leq\frac{a-1}{2}+\sqrt{2e(G)-an+\frac{(a+1)^{2}}{4}}.$$
It follows that $e(G)>\binom{n-1}{2}+\frac{a}{2}$. That is to say, $e(G)\geq\binom{n-1}{2}+\frac{a+1}{2}$.
By Lemma \ref{le2}, then $G$ contains a fractional $[a,b]$-factor. \hspace*{\fill}$\Box$

\section{Proof of Theorem \ref{thm101}}

By the Perron-Frobenius Theorem, $\rho(G)$ is always a positive number (unless $G$ is an empty graph),
and there exists an unique positive unit eigenvector corresponding to $\rho(G)$, which is called the {\it Perron vector} of $G$.

\begin{lem}[Fan, Goryainov, Huang and Lin\cite{Fan2021}, Fan and Lin\cite{Fan}]\label{le33}
Let $n=\sum_{i=1}^{t}n_i+s.$ If $n_{1}\geq n_{2}\geq \cdots \geq n_{t}\geq p $ and $n_{1}<n-s-p(t-1)$, then
$$\rho(K_{s}\vee(K_{n_{1}}+ K_{n_{2}} + \cdots + K_{n_{t}}))<\rho(K_{s}\vee(K_{n-s-p(t-1)}+ (t-1)K_{p})).$$
\end{lem}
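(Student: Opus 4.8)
The plan is to prove the inequality by a \textbf{vertex-transfer} (mass-transfer) argument: I will transform the graph $G := K_{s}\vee(K_{n_{1}}+\cdots+K_{n_{t}})$ into the extremal graph $G^{*} := K_{s}\vee(K_{N}+(t-1)K_{p})$, where $N=n-s-p(t-1)=\sum_{i=1}^{t}n_{i}-p(t-1)$, through a finite sequence of elementary moves, each of which strictly increases the spectral radius. A single move takes a vertex $v$ lying in some clique $K_{m_{j}}$ with $j\geq 2$ and $m_{j}>p$, deletes its $m_{j}-1$ edges inside that clique, and instead joins it to all $m_{1}$ vertices of the (currently) largest clique $K_{m_{1}}$; its $s$ edges to $K_{s}$ are unchanged. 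Since $n_{1}<N$ forces $\sum_{i\geq 2}(n_{i}-p)>0$, at least one such move is available, so $G\ne G^{*}$ and at least one strict increase will occur.

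Next I would record the Perron-vector structure of any graph $H=K_{s}\vee(K_{m_{1}}+\cdots+K_{m_{t}})$ appearing along the way. Because the vertices inside a single clique are pairwise interchangeable by an automorphism (each is adjacent to the rest of its clique and to all of $K_{s}$), the positive Perron vector $\mathbf{x}$ is constant on each clique; write $x_{i}$ for its value on $K_{m_{i}}$ and $y$ for its common value on $K_{s}$. With $\rho:=\rho(H)$, the eigenvalue equation at a clique vertex reads $\rho x_{i}=(m_{i}-1)x_{i}+sy$, i.e. $(\rho-m_{i}+1)\,x_{i}=sy$. For $s\geq 1$ the graph $H$ is connected and properly contains each $K_{m_{i}}$, so $\rho>m_{i}-1$ and every factor $\rho-m_{i}+1$ is positive; hence $m_{i}\geq m_{j}$ forces $x_{i}\geq x_{j}$. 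This monotonicity of the Perron entries in the clique sizes is the engine of the argument.

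With this in hand I would estimate a single move by the Rayleigh quotient, using the Perron vector $\mathbf{x}$ of the current graph $H$ as a test vector for the moved graph $H'$. Only the edges incident to $v$ change, so
$$\mathbf{x}^{\top}A(H')\mathbf{x}-\mathbf{x}^{\top}A(H)\mathbf{x}=2x_{j}\bigl(m_{1}x_{1}-(m_{j}-1)x_{j}\bigr).$$
Since the largest clique satisfies $m_{1}\geq m_{j}$ and therefore $x_{1}\geq x_{j}>0$, we get $m_{1}x_{1}\geq m_{j}x_{j}>(m_{j}-1)x_{j}$, so the right-hand side is strictly positive. Consequently $\rho(H')\geq \mathbf{x}^{\top}A(H')\mathbf{x}/\mathbf{x}^{\top}\mathbf{x}>\mathbf{x}^{\top}A(H)\mathbf{x}/\mathbf{x}^{\top}\mathbf{x}=\rho(H)$, i.e. each move strictly increases the spectral radius. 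I would also note that moving a vertex into $K_{m_{1}}$ keeps that clique the largest, so the hypothesis $m_{1}\geq m_{j}$ persists throughout the process.

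Finally I would assemble the chain: starting from $G$, repeatedly move a vertex from any clique of size exceeding $p$ (other than the largest) into the largest clique until every clique except the first has size exactly $p$. This consumes $\sum_{i\geq 2}(n_{i}-p)$ moves and leaves the first clique with $n_{1}+\sum_{i\geq 2}(n_{i}-p)=N$ vertices, so the final graph is precisely $G^{*}$. Chaining the strict inequalities over the (at least one) moves yields $\rho(G)<\rho(G^{*})$, as claimed. The main obstacle is the monotonicity step $x_{i}\geq x_{j}$, which rests on $\rho>m_{i}-1$; this is immediate once $s\geq 1$, and the degenerate case $s=0$ can be disposed of directly, since then $\rho(G)=n_{1}-1<N-1=\rho(G^{*})$.
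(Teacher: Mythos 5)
Your proof is correct. A preliminary remark: this paper does not prove Lemma~\ref{le33} at all --- it is quoted with citations to \cite{Fan2021} and \cite{Fan} --- so the comparison is with those sources rather than with anything internal to the paper. Your argument checks out in every detail: the eigen-equation $(\rho-m_i+1)x_i=sy$ (where in fact only the \emph{sum} of the Perron entries over $K_s$ is needed, not their constancy, which slightly simplifies the symmetry discussion) together with $\rho>m_i-1$ (valid since $K_{m_i}$ is a proper subgraph of the connected graph $H$ when $s\geq1$ and $t\geq2$) gives $x_i=sy/(\rho-m_i+1)$ increasing in $m_i$, hence $x_1\geq x_j$; the Rayleigh difference $2x_j\bigl(m_1x_1-(m_j-1)x_j\bigr)$ is then strictly positive; the bookkeeping is sound (each move keeps every clique of size at least $p$ because you insist $m_j>p$, keeps the first clique largest, and terminates at exactly $G^*$ after $\sum_{i\geq2}(n_i-p)$ moves); and the hypothesis $n_1<n-s-p(t-1)$ is precisely equivalent to $\sum_{i\geq2}(n_i-p)>0$, guaranteeing at least one strict step. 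You also correctly dispose of $s=0$ directly, and the case $t=1$ is vacuous under the hypothesis. Compared with the cited proofs, which shift all but $p$ vertices of one clique into another in a single step and verify the spectral increase by Perron-vector and eigen-equation computations for the pair of graphs, your one-vertex-at-a-time discretization is the same idea at finer granularity: it buys a trivial Rayleigh computation (only the edges at a single vertex change, so no characteristic-polynomial or quotient-matrix work is needed) at the cost of a longer chain of intermediate graphs, and both routes ultimately rest on the same monotonicity of Perron entries in clique size.
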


\begin{lem}\label{le44}
Graph $S_{\delta+r,\delta}\vee(K_{n-2\delta-r-1}+I_{\delta+1})$ is not ID-factor-critical.
\end{lem}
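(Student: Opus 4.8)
The plan is to apply the Tutte-type characterization in Theorem \ref{thm100}: to prove that $G:=S_{\delta+r,\delta}\vee(K_{n-2\delta-r-1}+I_{\delta+1})$ is not ID-factor-critical, it suffices to exhibit a single independent set $I$ whose size has the same parity as $|V(G)|=n$, together with a set $S\subseteq V(G)-I$, for which $o(G-I-S)>|S|$. First I would fix notation for the four blocks: write $A=V(K_\delta)$ and $B=V(I_r)$ for the clique and the independent part of $S_{\delta+r,\delta}=K_\delta\vee I_r$, and $C=V(K_{n-2\delta-r-1})$, $D=V(I_{\delta+1})$ for the two parts of $K_{n-2\delta-r-1}+I_{\delta+1}$. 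The structural observation driving everything is that, because $C$ and $D$ are joined to $S_{\delta+r,\delta}$ but not to each other, every vertex of the independent set $D$ has neighborhood exactly $A\cup B$; in particular $|D|=\delta+1>\delta=|A|$, so once $B$ is deleted the $\delta+1$ vertices of $D$ can only see the $\delta$ vertices of $A$.

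With this in mind I would take the independent set $I=B=V(I_r)$ and the deletion set $S=A=V(K_\delta)$, so that $|S|=\delta$. After deleting $I\cup S$ the whole neighborhood $A\cup B$ of $D$ has been removed, hence each of the $\delta+1$ vertices of $D$ becomes an isolated vertex of $G-I-S$, while $C=V(K_{n-2\delta-r-1})$ induces a single clique. Therefore the isolated vertices of $D$ alone already give $o(G-I-S)\ge |D|=\delta+1>\delta=|S|$ (the clique on $C$ contributing one further odd component exactly when $n-2\delta-r-1$ is odd). By Theorem \ref{thm100} this violates the condition for ID-factor-criticality, so $G$ is not ID-factor-critical.

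The step needing the most care, and the point I expect to be the main obstacle, is the admissibility of the chosen witness: Theorem \ref{thm100} only considers independent sets $I$ with $|I|\equiv n\pmod 2$, so one must confirm that $|B|=r$ has the same parity as $n$, equivalently that $n-r$ is even. This parity is precisely the one controlling whether the clique on $C$ counts as an odd component, so the two pieces of bookkeeping are linked: I would check that $n-2\delta-r-1$ and $n-r$ carry the required parities simultaneously, and verify that the strict inequality $o(G-I-S)>|S|$ survives with the correct parity of $o(G-I-S)-|S|$. Once the parity of the witness is pinned down, the component count above is routine and the conclusion follows immediately from Theorem \ref{thm100}.
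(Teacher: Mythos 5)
Your proof is correct and is essentially the paper's own argument: the paper likewise removes $I=I_r$ and then the clique $V(K_\delta)$ (phrased as ``$K_\delta\vee(K_{n-2\delta-r-1}+I_{\delta+1})$ has no perfect matching since the $\delta+1$ vertices of $I_{\delta+1}$ are adjacent only to the $\delta$ vertices of $K_\delta$''), which is exactly your Tutte-type count $o(G-I-S)\ge\delta+1>\delta=|S|$. The parity caveat you flag (that $r\equiv n\pmod 2$ is needed for $I=I_r$ to be an admissible witness) is genuine but is left implicit in the paper as well; it is satisfied in the only place the lemma is invoked, where $r=|I|$ for an independent set $I$ already chosen with $|I|\equiv n\pmod 2$.
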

\begin{proof}
Let $G= S_{\delta+r,\delta}\vee(K_{n-2\delta-r-1}+I_{\delta+1})$ (see Fig. \ref{f2}).
Suppose to the contrary that $G$ is ID-factor-critical.
By the definition of an ID-factor-critical graph,
we have $G-I$ has a perfect matching for any independent set $I$ of $G$ whose size has the same parity as $|V(G)|$.

Note that $S_{\delta+r, \delta}=K_{\delta}\vee I_{r}$. However, if we take $I=I_r$ and let $H=G-I$.
Then $$H\cong K_{\delta}\vee(K_{n-2\delta-r-1}+I_{\delta+1}).$$
Note that the vertices of $I_{\delta+1}$ are only adjacent to the vertices of $K_{\delta}$. Hence $H$ has no perfect matching, a contradiction.
\end{proof}
Now, we are in a position to present the proof of Theorem \ref{thm101}.
\begin{figure}
\centering
\includegraphics[width=0.40\textwidth]{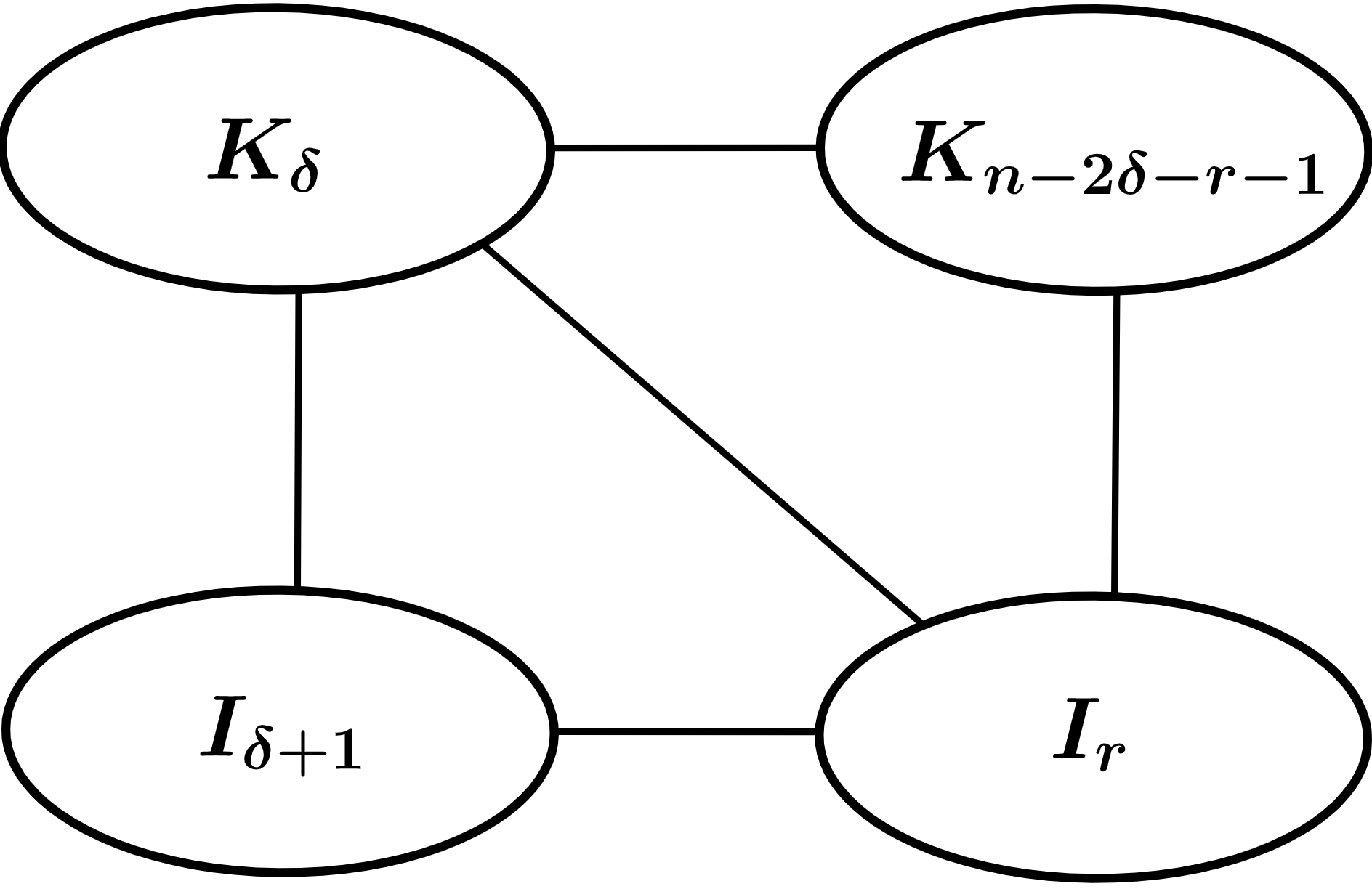}\\
\caption{Graph $S_{\delta+r,\delta}\vee(K_{n-2\delta-r-1}+I_{\delta+1}).$
}\label{f2}
\end{figure}

\medskip
\noindent  \textbf{Proof of Theorem \ref{thm101}.}
Assume that $G$ is not ID-factor-critical. According to Theorem \ref{thm100}, there exists some independent set $I$ such that $|I|$ has the same parity as
$|V(G)|=n$ , we have $o(G-I-S)\geq |S|+1$ for some subset $S\subseteq V(G)-I$. Let $|I|=r$ and $|S|=s$. Then $o(G-I-S)\geq s+1$.
Since $n-r$ is even, $o(G-I-S)$ and $s$ have the same parity. Hence we have $o(G-I-S)\geq s+2$.
It is clear that $G$ is a spanning subgraph of $G'=I_r\vee (K_s\vee(K_{n_{1}}+ K_{n_{2}}+\cdots+K_{n_{s+2}}))$
for some odd integers $n_{1}\geq n_{2}\geq \cdots \geq n_{s+2}>0$ with $\sum_{i=1}^{s+2}n_{i}=n-r-s$. Then we have
\begin{eqnarray}\label{equ1}
\rho(G)\leq\rho(G'),
\end{eqnarray}
where equality holds if and only if $G\cong G'$.
Let $G''=S_{s+r, s}\vee(K_{n-2s-r-1}+I_{s+1})$. By Lemma \ref{le33}, we obtain that
\begin{eqnarray}\label{equ2}
\rho(G')\leq \rho(G''),
\end{eqnarray}
where equality holds if and only if $(n_{1}, n_{2}, \ldots ,n_{s+2})=(n-2s-r-1,1,\ldots ,1)$.

\vspace{1.5mm}
\noindent\textbf{Case 1.} $s=\delta$.
\vspace{1mm}

Combining (\ref{equ1}) and (\ref{equ2}), we have
$$\rho(G)\leq\rho(G')\leq \rho(G'')=\rho(S_{\delta+r, \delta}\vee(K_{n-2\delta-r-1}+I_{\delta+1})).$$
By the assumption $\rho(G)\geq\rho(S_{\delta+r, \delta}\vee(K_{n-2\delta-r-1}+I_{\delta+1}))$,
we have $G\cong S_{\delta+r, \delta}\vee(K_{n-2\delta-r-1}+I_{\delta+1})$.
By Lemma \ref{le44}, $S_{\delta+r, \delta}\vee(K_{n-2\delta-r-1}+I_{\delta+1})$ is not ID-factor-critical.
Hence $G\cong S_{\delta+r, \delta}\vee(K_{n-2\delta-r-1}+I_{\delta+1})$.

\vspace{1.5mm}
\noindent\textbf{Case 2.} $s\geq \delta+1$.
\vspace{1mm}

Recall that $G''=S_{s+r, s}\vee(K_{n-2s-r-1}+I_{s+1})$.
The vertex set of $G''$ can be divided into $V(G'')=V(K_{s})\cup V(I_{s+1})\cup V(I_{r})\cup V(K_{n-2s-r-1})$,
where $V(K_{s})=\left\{u_{1}, u_{2}, \ldots ,u_{s} \right\}$,
$V(I_{s+1})=\left \{v_{1}, v_{2}, \ldots ,v_{s+1}\right\}$, $V(I_r)=\left \{w_{1}, w_{2}, \ldots ,w_{r}\right\}$
and $V(K_{n-2s-r-1})=\left\{z_{1}, z_{2}, \ldots ,z_{n-2s-r-1}\right\}$.
Let $E_{1}=\left\{v_{i}z_{j}|\delta+2\leq i\leq s+1, 1\leq j\leq n-2s-r-1\right\}$
$\cup\left\{v_iv_j|\delta+2\leq i\leq s,i+1\leq j\leq s+1\right\}$
and $E_{2}=\left\{u_{i}v_{j}|\delta+1\leq i\leq s,1\leq j\leq\delta+1\right\}$.
Let $G^*=G''+E_{1}-E_{2}$. Obviously, $G^*\cong S_{\delta+r, \delta}\vee(K_{n-2\delta-r-1}+I_{\delta+1}).$
Let $x$ be the perron vector of $A(G'')$, and let $\rho'' =\rho(G'')$.
By symmetry, $x$ takes the same value on the vertices of $V(K_{s})$, $V(I_{s+1})$, $V(I_r)$ and $V(K_{n-2s-r-1})$, respectively.
It is easy to see that
$$A(G'')=\left[
\begin{array}{cccc}
(J-I)_{s\times s}&J_{s\times (s+1)}&J_{s\times r}&J_{s\times (n-2s-r-1)}\\
J_{(s+1)\times s}&O_{(s+1)\times (s+1)}&J_{(s+1)\times r}&O_{(s+1)\times (n-2s-r-1)}\\
J_{r\times s}&J_{r\times (s+1)}&O_{r\times r}&J_{r\times(n-2s-r-1)}\\
J_{(n-2s-r-1)\times s}&O_{(n-2s-r-1)\times (s+1)}&J_{(n-2s-r-1)\times r}&(J-I)_{(n-2s-r-1)\times(n-2s-r-1)}
\end{array}
\right].
$$
We denote the entry of $x$ by $x_{1}$, $x_{2}$, $x_{3}$ and $x_4$ corresponding to the vertices in the above four vertex sets, respectively.
By $A(G'')x=\rho'' x$, we have
\begin{eqnarray} \label{equ100}
&&\rho'' x_{2}=sx_{1}+rx_3, \\ \label{equ101}
&&\rho'' x_{3}=sx_{1}+(s+1)x_2+(n-2s-r-1)x_4,   \\ \label{equ102}
&&\rho'' x_{4}=sx_{1}+rx_3+(n-2s-r-2)x_4.
\end{eqnarray}
Observe that $n\geq2s+r+2$. According to (\ref{equ100}) and (\ref{equ102}), we obtain that $x_{4}\geq x_{2}$.
By (\ref{equ101}) and (\ref{equ102}), we have $\rho'' x_3-\rho'' x_{4}=(s+1)x_{2}-rx_3+x_4$.
It follows that $x_4=\frac{(\rho''+r)x_3-(s+1)x_2}{\rho''+1}\geq x_2$.
Then we have $x_3\geq \frac{\rho''+s+2}{\rho''+r}x_2$.
Note that $s\geq\delta+1$ and $\delta\geq3r+1$. Then $\rho''+s+2\geq\rho''+\delta+3>\rho''+r$, and hence $x_3>x_2$.
Combining (\ref{equ100}), we have
\begin{eqnarray} \label{equ103}
x_2>\frac{sx_1}{\rho''-r}.
\end{eqnarray}
Recall that $G^*\cong S_{\delta+r, \delta}\vee(K_{n-2\delta-r-1}+I_{\delta+1}).$
Note that $G^*$ contains $K_{n-2\delta-r-1}$ as a proper subgraph. Then $\rho^*>n-2\delta-r-2$.
Similarly, let $y$ be the perron vector of $A(G^*)$, and let $\rho^* =\rho(G^*)$.
By symmetry, $y$ takes the same value (say $y_{1}$, $y_{2}$, $y_{3}$ and $y_{4}$) on the vertices of
$V(K_{\delta})$, $V(I_{\delta+1})$, $V(I_r)$ and $V(K_{n-2\delta-r-1})$.
By $A(G^*)y=\rho^*y$, we have
\begin{eqnarray}\label{equ104}
&&\rho^*y_{2}=\delta y_{1}+ry_3, \\ \label{equ106}
&&\rho^*y_{4}=\delta y_{1}+ry_{3}+(n-2\delta-r-2)y_4.
\end{eqnarray}
Combining (\ref{equ104}) and (\ref{equ106}), we have
\begin{eqnarray}\label{equ107}
y_{4}=\frac{\rho^*y_{2}}{\rho^*-(n-2\delta-r-2)}.
\end{eqnarray}
Note that $n\geq2s+r+2$. Then $\delta+1\leq s\leq \frac{n-r-2}{2}$. Since $G''$ is not a complete graph, $\rho''< n-1$.

\begin{claim}\label{claim100}
 $\rho''<\rho^*$.
\end{claim}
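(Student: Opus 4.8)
The plan is to prove Claim \ref{claim100} by an edge-switching comparison via the Rayleigh quotient. Recall that $G^{*}=G''+E_{1}-E_{2}$ has the same vertex set as $G''$, and let $x$ be the Perron vector of $G''$ used above, with entries $x_{1},x_{2},x_{3},x_{4}$ on $V(K_{s})$, $V(I_{s+1})$, $V(I_{r})$ and $V(K_{n-2s-r-1})$. Since $x^{\top}A(G'')x=\rho''x^{\top}x$, the variational characterization of the spectral radius gives
\[
\rho^{*}-\rho''\geq\frac{x^{\top}\big(A(G^{*})-A(G'')\big)x}{x^{\top}x}=\frac{2}{x^{\top}x}\Big(\sum_{uv\in E_{1}}x_{u}x_{v}-\sum_{uv\in E_{2}}x_{u}x_{v}\Big),
\]
so it suffices to show that the bracketed difference of edge sums is strictly positive. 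Note that Lemma \ref{le4} is not directly applicable, since $A(G^{*})-A(G'')$ is not a nonnegative matrix.

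Next I would evaluate the two edge sums using the symmetry of $x$. The set $E_{1}$ consists of $(s-\delta)(n-2s-r-1)$ edges joining $V(I_{s+1})$ to $V(K_{n-2s-r-1})$ together with $\binom{s-\delta}{2}$ edges inside $V(I_{s+1})$, while $E_{2}$ consists of $(s-\delta)(\delta+1)$ edges joining $V(K_{s})$ to $V(I_{s+1})$. Hence
\[
\sum_{uv\in E_{1}}x_{u}x_{v}-\sum_{uv\in E_{2}}x_{u}x_{v}=(s-\delta)x_{2}\Big[(n-2s-r-1)x_{4}+\tfrac{s-\delta-1}{2}x_{2}-(\delta+1)x_{1}\Big].
\]
Since $s\geq\delta+1$ and $x_{2}>0$, the whole problem reduces to proving the positivity of the bracket $B:=(n-2s-r-1)x_{4}+\tfrac{s-\delta-1}{2}x_{2}-(\delta+1)x_{1}$.

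To control $B$ I would combine (\ref{equ100}) and (\ref{equ102}), which yield the exact identity $x_{4}=\frac{\rho''}{\rho''-(n-2s-r-2)}x_{2}\geq x_{2}$, together with the lower bound (\ref{equ103}), namely $x_{2}>\frac{s}{\rho''-r}x_{1}$, and the upper bound $\rho''<n-1$. Substituting these into $B$ turns the target into an inequality purely in the parameters $n,s,r,\delta$ and $\rho''$, which I would verify over the full admissible range $\delta+1\leq s\leq\frac{n-r-2}{2}$. Here the two positive terms of $B$ play complementary roles: for small $s$ the clique-join term $(n-2s-r-1)x_{4}$ dominates, since then $n-2s-r-2$ is close to $\rho''$ and so the ratio $x_{4}/x_{2}$ is large, whereas for $s$ near $\frac{n-r-2}{2}$ the quadratic term $\tfrac{s-\delta-1}{2}x_{2}$ dominates. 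The strong lower bounds $n\geq 20\delta+r+8$ (and the cubic bound) in the hypothesis are exactly what guarantee that, after these substitutions, $B>0$ throughout.

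The main obstacle is this last uniform estimate: neither crude bound ($x_{4}\geq x_{2}$ alone, nor $\rho''<n-1$ alone) suffices across the entire range of $s$, because the balance between the $x_{4}$-term and the $x_{2}$-term shifts as $s$ grows. I expect to handle it by splitting the range of $s$ at a suitable threshold, using the sharp ratio $x_{4}/x_{2}=\rho''/(\rho''-(n-2s-r-2))$ on the low end and the combinatorial coefficient $\tfrac{s-\delta-1}{2}$ on the high end, and invoking the size hypotheses to close each case. Once $B>0$ is established, the displayed chain gives $\rho^{*}>\rho''$, completing the claim.
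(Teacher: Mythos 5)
Your setup is correct and genuinely different from the paper's: you use the one\hbox{-}sided Rayleigh bound $\rho^{*}-\rho''\geq x^{\top}(A(G^{*})-A(G''))x/x^{\top}x$ with only the Perron vector of $G''$, your edge counts for $E_{1}$ and $E_{2}$ are right, and the reduction to the positivity of $B=(n-2s-r-1)x_{4}+\frac{s-\delta-1}{2}x_{2}-(\delta+1)x_{1}$ is valid. But the proof stops exactly where the real work begins: $B>0$ is never established, only announced as something you ``would verify'' by splitting the range of $s$. This is a genuine gap, and not a routine one. The split you describe does not cover the range as stated: the ``high end'' criterion $\frac{(s-\delta-1)s}{2}x_{2}>(\delta+1)x_{1}$, combined with $x_{2}>\frac{s}{\rho''-r}x_{1}$ and $\rho''<n-1$, only takes effect once $s\gtrsim\sqrt{2(\delta+1)n}$, not merely for $s$ ``near $\frac{n-r-2}{2}$''; on the ``low end'' the crude bound $x_4/x_2\ge 1$ fails already at $s=\delta+1$ (there one genuinely needs $x_{4}/x_{2}=\rho''/(\rho''-(n-2s-r-2))\gtrsim (n-2\delta-r-3)/(2\delta+r+3)$), and one must check that the two regimes actually overlap. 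They do --- the argument can be completed, for instance by eliminating $x_{1}$ via the eigen-equation $(\rho''+1)x_{1}=(\rho''+s+1)x_{2}+(n-2s-r-1)x_{4}$ and treating $s\geq 5\delta+5$ and $s<5\delta+5$ separately --- but none of this is in your write-up, so the claim is not proved.

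For comparison, the paper argues by contradiction with the two\hbox{-}sided form $y^{\top}(A(G^{*})-A(G''))x$, where $y$ is the Perron vector of $G^{*}$. The payoff is that the ratio $y_{4}/y_{2}=\rho^{*}/(\rho^{*}-(n-2\delta-r-2))$ is uniformly large (of order $n/(2\delta+r+1)$) because the extremal graph $G^{*}$ has a fixed small independent part $I_{\delta+1}$, independent of $s$; together with the always-positive coefficient $2n-3s-2\delta-2r-4\geq\frac{n-r}{2}-2\delta-1$ this yields a single chain of inequalities with no case analysis. Your one-sided version loses exactly this leverage when $s$ is large (then $x_{4}/x_{2}\to 1$), which is why your argument is forced into the delicate regime-splitting you did not carry out.
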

\begin{proof}
Suppose that $\rho''\geq \rho^*$. By $x_4\geq x_2$, (\ref{equ103}) and (\ref{equ107}), we have
\begin{eqnarray*}
&&y^{T}(\rho^*-\rho'')x\\
&=&y^{T}(A(G^*)-A(G''))x\\
&=&\sum_{i=\delta+2}^{s+1}\sum_{j=1}^{n-2s-r-1}(x_{v_{i}}y_{z_j}+x_{z_j}y_{v_i})+\sum_{i=\delta+2}^{s}
\sum_{j=i+1}^{s+1}(x_{v_{i}}y_{v_{j}}+x_{v_{j}}y_{v_{i}})-\sum_{i=\delta+1}^{s}\sum_{j=1}^{\delta+1}(x_{u_{i}}y_{v_{j}}+x_{v_{j}}y_{u_{i}})\\
&=&(n-2s-r-1)(s-\delta)(x_{2}y_{4}+x_{4}y_{4})+(s-\delta-1)(s-\delta)x_{2}y_{4}-(s-\delta)(\delta+1)(x_{1}y_{2}\\
&&+x_{2}y_{4})\\
&\geq&(s-\delta)\left[2(n-2s-r-1)x_2y_4+(s-\delta-1)x_2y_{4}-(\delta+1)x_{2}y_{4}-(\delta+1)x_1y_2\right]\\
&=&(s-\delta)\left[(2n-3s-2\delta-2r-4)x_{2}y_{4}-(\delta+1)x_{1}y_{2}\right]\\
&>&(s-\delta)\left[(2n-3s-2\delta-2r-4)\cdot\frac{sx_1}{\rho''-r}\cdot\frac{\rho^*y_2}{\rho^*-(n-2\delta-r-2)}-(\delta+1)x_{1}y_{2}\right]\\
&=&\frac{(s-\delta)x_1y_2}{(\rho''-r)(\rho^*-(n-2\delta-r-2))}\left[(2n-3s-2\delta-2r-4)s\rho^*-(\delta+1)(\rho''-r)(\rho^* \right. \\
&& \left.-(n-2\delta-r-2))\right]\\
&=&\frac{(s-\delta)(\delta+1)x_1y_2}{(\rho''-r)(\rho^*-(n-2\delta-r-2))}\left[\rho^*(2n-2\delta-3s-2r-4)\cdot\frac{s}{\delta+1}-(\rho''-r)(\rho^*\right. \\
&& \left.-(n-2\delta-r-2))\right].
\end{eqnarray*}
Note that $s\geq\delta+1$, $\rho''\geq\rho^*$ and $\rho^*>\delta-1\geq3r$. Then
\begin{eqnarray*}
&&y^{T}(\rho^*-\rho'')x\\
&>&\frac{(s-\delta)(\delta+1)x_1y_2}{(\rho''-r)(\rho^*-(n-2\delta-r-2))}\left[\rho^*(2n-2\delta-3s-2r-4)-\rho''\rho^*+\rho''(n-2\delta-r-2)\right.\\
&&\left.+r\rho^*-r(n-2\delta-r-2)\right]\\
&=&\frac{\rho^*(s-\delta)(\delta+1)x_1y_2}{(\rho''-r)(\rho^*-(n-2\delta-r-2))}\left[(2n-2\delta-3s-2r-4)-\rho''+\frac{\rho''}{\rho^*}\cdot(n-2\delta-r-2)\right.\\
&&\left.+r-\frac{r}{\rho^*}\cdot(n-2\delta-r-2)\right]\\
&>&\frac{\rho^*(s-\delta)(\delta+1)x_1y_2}{(\rho''-r)(\rho^*-(n-2\delta-r-2))}\left[(2n-2\delta-3s-2r-4)-\rho''+(n-2\delta-r-2)\right.\\
&&\left.+r-\frac{1}{3}\cdot(n-2\delta-r-2)\right]\\
&=&\frac{\rho^*(s-\delta)(\delta+1)x_1y_2}{(\rho''-r)(\rho^*-(n-2\delta-r-2))}\left(\frac{8}{3}n-\frac{10}{3}\delta-3s-\frac{5}{3}r-\frac{16}{3}-\rho''\right).\\
\end{eqnarray*}
Since $K_s\subset G''$ and $\delta\geq3r+1$, $\rho''> \rho(K_s)=s-1\geq \delta >r$.
Note that $s\leq\frac{n-r-2}{2}$, $\rho''< n-1$, $\rho^*>n-2\delta-r-2$ and $n\geq 20\delta+r+8$. Then
\begin{eqnarray*}
&&y^{T}(\rho^*-\rho'')x\\
&>&\frac{\rho^*(s-\delta)(\delta+1)x_1y_2}{(\rho''-r)(\rho^*-(n-2\delta-r-2))}\left(\frac{8}{3}n-\frac{10}{3}\delta-3\cdot\frac{n-r-2}{2}-\frac{5}{3}r-\frac{16}{3}-\rho''\right)\\
\end{eqnarray*}
\begin{eqnarray*}
&=&\frac{\rho^*(s-\delta)(\delta+1)x_1y_2}{(\rho''-r)(\rho^*-(n-2\delta-r-2))}\left(\frac{7}{6}n-\frac{10}{3}\delta-\frac{1}{6}r-\frac{7}{3}-\rho''\right)\\
&=&\frac{\rho^*(s-\delta)(\delta+1)x_1y_2}{(\rho''-r)(\rho^*-(n-2\delta-r-2))}\left(\frac{1}{6}n-\frac{10}{3}\delta-\frac{1}{6}r-\frac{4}{3}+(n-1)-\rho''\right)\\
&>&\frac{\rho^*(s-\delta)(\delta+1)x_1y_2}{(\rho''-r)(\rho^*-(n-2\delta-r-2))}\cdot\frac{n-20\delta-r-8}{6}\\
&\geq&0.
\end{eqnarray*}
This implies that $\rho^*>\rho''$, which contradicts the assumption $\rho''\geq \rho^*$.
\end{proof}
By Claim \ref{claim100}, (\ref{equ1}) and (\ref{equ2}), we have
$$\rho(G)\leq\rho(G')\leq\rho(G'')<\rho(G^*)=\rho(S_{\delta+r, \delta}\vee(K_{n-2\delta-r-1}+I_{\delta+1})),$$
which contradicts $\rho(G)\geq\rho(S_{\delta+r, \delta}\vee(K_{n-2\delta-r-1}+I_{\delta+1}))$.

\vspace{1.5mm}
\noindent\textbf{Case 3.}  $s<\delta.$
\vspace{1mm}

Recall that $G'=I_r\vee (K_s\vee(K_{n_{1}}+ K_{n_{2}}+\cdots+K_{n_{s+2}}))$.
Then $d_{G'}(v)=n_{s+2}+s+r-1\geq \delta$ for $v \in V(K_{n_{s+2}})$, and hence $n_{s+2}\geq\delta-s-r+1$.
Let $G'''=I_r\vee(K_s\vee(K_{n-s-r-(s+1)(\delta-s-r+1)}+(s+1)K_{\delta-s-r+1}))$.
By Lemma \ref{le33}, we have
\begin{eqnarray}\label{equ7}
\rho(G')\leq\rho(G'''),
\end{eqnarray}
where equality holds if and only if $(n_1,n_2,\ldots,n_{s+2})=(n-s-r-(s+1)(\delta-s-r+1),\delta-s-r+1,\ldots,\delta-s-r+1).$
Let $\rho'''=\rho(G''')$.

\begin{claim}\label{claim101}
$\rho'''< n-r-1-(s+1)(\delta-s+1)$.
\end{claim}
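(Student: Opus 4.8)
The plan is to express $\rho'''=\rho(G''')$ as the Perron root of a small nonnegative matrix and then bound that root. Write $p=\delta-s-r+1$ and $m=n-s-r-(s+1)p$, so that $G'''=I_r\vee(K_s\vee(K_m+(s+1)K_p))$. I would partition $V(G''')$ into the four classes $V(I_r)$, $V(K_s)$, $V(K_m)$ and $W$, where $W$ is the union of the $s+1$ copies of $K_p$. This partition is equitable: every vertex of $W$ has exactly $p-1$ neighbours inside $W$, and all other adjacencies between classes are complete or empty. Since $G'''$ is connected, its Perron vector is constant on each class, so $\rho'''$ equals the Perron (largest) eigenvalue of the quotient matrix
\[
B=\begin{pmatrix}
0 & s & m & (s+1)p\\
r & s-1 & m & (s+1)p\\
r & s & m-1 & 0\\
r & s & 0 & p-1
\end{pmatrix},
\]
whose rows and columns are indexed by $V(I_r),V(K_s),V(K_m),W$ in this order.

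Set $\beta=n-r-1-(s+1)(\delta-s+1)$; after substituting for $m$ and $p$ this is a concrete polynomial in $n,s,r,\delta$. To prove $\rho(B)=\rho'''<\beta$ I would sidestep any location-of-roots subtlety by appealing to the theory of $M$-matrices (Berman and Plemmons \cite{Berman1979}): the matrix $\beta I_4-B$ is a $Z$-matrix, since its off-diagonal entries are $\le 0$ because $B\ge 0$, and for such a matrix $\rho(B)<\beta$ holds if and only if all four leading principal minors of $\beta I_4-B$ are positive. Thus the claim reduces to four scalar inequalities in $n,s,r,\delta$, the first three of which are routine and the last of which is $\det(\beta I_4-B)=\phi(\beta)>0$, where $\phi(x)=\det(xI_4-B)$ is the characteristic polynomial of $B$.

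The heart of the argument, and the step I expect to be the main obstacle, is verifying $\phi(\beta)>0$. Expanding the $4\times4$ determinant and substituting $m=n-s-r-(s+1)p$ and $p=\delta-s-r+1$ turns $\phi(\beta)>0$ into a single polynomial inequality in the four parameters. The real difficulty is that this inequality must hold uniformly over the whole admissible range $1\le s\le \delta-1$ of Case~3, not for a fixed $s$; moreover the extremal graph $G^*$ has the same clique number as $G'''$, so the margin is only of lower order in $n$. I would therefore isolate the leading term in $n$, confirm it has the right sign, and then dominate every negative lower-order correction, which is exactly where the hypotheses $n\ge 20\delta+r+8$ and $n\ge \delta^3-\frac{r-3}{2}\delta^2-\frac{r^2-2r-4}{2}\delta-\frac{r^2-3r-3}{2}$ must enter; the worst choice of $s$ would be handled by using $p\ge 1$ and $s<\delta$ to control the cross terms $(s+1)p$ and $s(s+1)p$ appearing in the minors.

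As an independent check and a way to localize $\rho'''$ before the fine analysis, I would also record that $\delta(G''')=\delta$ (realized by the $K_p$-vertices) and compute $e(G''')$ in closed form, so that Lemma \ref{le3} gives $\rho'''\le \frac{\delta-1}{2}+\sqrt{2e(G''')-\delta n+\frac{(\delta+1)^2}{4}}$, with Proposition \ref{pro} controlling the dependence on $\delta$. This cruder estimate is not tight enough to finish on its own, but it pins $\rho'''$ into a small interval and serves as a safeguard for the sign computations in the $M$-matrix criterion above.
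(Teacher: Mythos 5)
Your reduction is sound as far as it goes: the four-class partition of $G'''$ is equitable (it is the orbit partition of the automorphism group), $\rho'''$ is indeed the Perron root of your quotient matrix $B$, and the Berman--Plemmons characterization correctly converts $\rho(B)<\beta$ into the positivity of the leading principal minors of $\beta I_4-B$. But the proposal stops exactly where the proof has to happen: the decisive inequality $\phi(\beta)>0$ (and the positivity of the other minors, which you call routine) is only announced as a plan about isolating the leading term in $n$, not carried out. And in this instance the deferral hides a fatal problem rather than a technicality. With $p=\delta-s-r+1$ and $m=n-s-r-(s+1)p$ one computes
\[
\beta-(m-1)=\bigl(n-r-1-(s+1)(\delta-s+1)\bigr)-\bigl(n-s-r-1-(s+1)(\delta-s-r+1)\bigr)=s-r(s+1)\le -r<0,
\]
so $\beta I_4-B$ has a negative diagonal entry and cannot be a nonsingular $M$-matrix; by your own criterion this forces $\rho(B)\ge\beta$, not $\rho(B)<\beta$. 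Equivalently, without any matrix machinery: $K_s\vee K_m=K_{s+m}$ is a subgraph of $G'''$, hence
\[
\rho'''\ \ge\ s+m-1\ =\ n-r-1-(s+1)(\delta-s-r+1)\ =\ \beta+r(s+1)\ >\ \beta .
\]
So the inequality you set out to certify fails for every admissible $s$ and every $r\ge 1$, and no estimation of $\phi(\beta)$ can rescue the plan.

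For comparison, the paper proves the claim by contradiction from $\rho'''\ge\beta$ using the eigen-equations of the Perron vector; the step where it replaces the positive denominator $\rho'''-(m-1)$ by $\beta-(m-1)$ divides by the same quantity $s-r(s+1)<0$, which is precisely where the bound with $(\delta-s+1)$ instead of $(\delta-s-r+1)$ enters. What Case 3 actually uses downstream is only the weaker conclusion $\rho'''<n-\delta-r-1$, and your quotient-matrix/minor framework is a perfectly reasonable tool for attacking that corrected target. As written, however, your proposal neither completes a proof of the stated claim nor could it, since the stated inequality is incompatible with the clique $K_{s+m}$ sitting inside $G'''$.
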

\begin{proof}
Suppose to the contrary that  $\rho'''\geq n-r-1-(s+1)(\delta-s+1)$.
Let $x$ be the perron vector of $A(G''')$. By symmetry, $x$ takes the same values $x_1$, $x_2$, $x_3$ and $x_4$
on the vertices of $K_s$, $(s+1)K_{\delta-s-r+1}$, $I_r$ and $K_{n-s-r-(s+1)(\delta-s-r+1)}$, respectively.
According to $A(G''')x=\rho''' x$, we obtain that
\begin{eqnarray}
&&\rho'''x_1=(s-1)x_1+(s+1)(\delta-s-r+1)x_2+rx_3+(n-s-r-(s+1)(\delta-s \nonumber \\ \label{equ8}
&&~~~~~~~~~~~~~~-r+1))x_4,\\ \label{equ9}
&&\rho'''x_2=sx_1+(\delta-s-r)x_2+rx_3,\\ \label{equ10}
&&\rho'''x_3=sx_1+(s+1)(\delta-s-r+1)x_2+(n-s-r-(s+1)(\delta-s-r+1))x_4,\\ \label{equ1000}
&&\rho'''x_4=sx_1+rx_3+(n-s-r-1-(s+1)(\delta-s-r+1))x_4.
\end{eqnarray}
By (\ref{equ8}) and (\ref{equ10}), we have
\begin{eqnarray}\label{equ13}
&&x_3=\frac{(\rho'''+1)x_1}{\rho'''+r}.
\end{eqnarray}
Substituting (\ref{equ13}) into (\ref{equ9}) and (\ref{equ1000}), we have
\begin{eqnarray}\label{equ11}
&&x_2=\frac{sx_1+\frac{r(\rho'''+1)}{\rho'''+r}x_1}{\rho'''-\delta+s+r},\\\label{equ12}
&&x_4=\frac{sx_1+\frac{r(\rho'''+1)}{\rho'''+r}x_1}{\rho'''-[n-s-r-1-(s+1)(\delta-s-r+1)]}.
\end{eqnarray}
Since $n\geq \delta^3-\frac{r-3}{2}\delta^2-\frac{r^2-2r-4}{2}\delta-\frac{r^2-3r-3}{2}$, we have
$\rho'''\geq n-r-1-(s+1)(\delta-s+1)>\delta-r+1$.
Substituting (\ref{equ13}), (\ref{equ11}) and (\ref{equ12}) into (\ref{equ8}), we have

\begin{eqnarray*}
\rho'''+1&=&s+\frac{(s+1)(\delta-s-r+1)(s+\frac{r(\rho'''+1)}{\rho'''+r})}{\rho'''-\delta+s+r}+\frac{r(\rho'''+1)}{\rho'''+r}\\
&&+\frac{[n-s-r-(s+1)(\delta-s-r+1)](s+\frac{r(\rho'''+1)}{\rho'''+r})}{\rho'''-(n-s-r-1-(s+1)(\delta-s-r+1))}\\
&\leq&s+\frac{(s+1)(\delta-s-r+1)(s+r)}{\rho'''-\delta+s+r}+r\\
&&+\frac{[n-s-r-(s+1)(\delta-s-r+1)](s+r)}{\rho'''-(n-s-r-1-(s+1)(\delta-s-r+1))}\\
&<&s+\frac{(s+1)(\delta-s-r+1)(s+r)}{(\delta-r+1)-\delta+s+r}+r\\
&&+\frac{[n-s-r-(s+1)(\delta-s-r+1)](s+r)}{[n-r-1-(s+1)(\delta-s+1)]-(n-s-r-1-(s+1)(\delta-s-r+1))}\\
&=&s+(s+r)(\delta-s-r+1)+r+\frac{[n-s-r-(s+1)(\delta-s-r+1)](s+r)}{s-sr-r}\\
&=&n-r-1-(s+1)(\delta-s+1)-\frac{1}{{sr-s+r}}[(sr+2r)n+(2r-1)s^3+(2r^2-2\delta r\\
&&+\delta+1)s^2+(r^3-\delta r^2-r^2-3\delta r-2r+1)s+r^3-\delta r^2-3r^2-2\delta r-3r].
\end{eqnarray*}
Let $f(n)=(sr+2r)n+(2r-1)s^3+(2r^2-2\delta r+\delta+1)s^2+(r^3-\delta r^2-r^2-3\delta r-2r+1)s+r^3-\delta r^2-3r^2-2\delta r-3r$.
We assert that $f(n)\geq 0$. Suppose that $f(n)<0$. Then
$n<\frac{1}{sr+2r}[(-2r+1)s^3+(-2r^2+2\delta r-\delta-1)s^2+(-r^3+\delta r^2+r^2+3\delta r+2r-1)s-r^3+\delta r^2+3r^2+2\delta r+3r]$.
Note that $0\leq s<\delta$, $-2r+1<0$, $-2r^2+2\delta r-\delta-1>0$ and $-r^3+\delta r^2+r^2+3\delta r+2r-1>0$. Then
\begin{eqnarray*}
n&<&\frac{1}{sr+2r}[(-2r+1)s^3+(-2r^2+2\delta r-\delta-1)s^2+(-r^3+\delta r^2+r^2+3\delta r+2r-1)s\\
&&-r^3+\delta r^2+3r^2+2\delta r+3r]\\
&<&\frac{1}{2r}[(-2r^2+2\delta r-\delta-1)\delta^2+(-r^3+\delta r^2+r^2+3\delta r+2r-1)\delta-r^3+\delta r^2+3r^2\\
&&+2\delta r+3r]\\
&=&\frac{1}{2r}[(2r-1)\delta^3+(-r^2+3r-1)\delta^2+(-r^3+2r^2+4r-1)\delta-r^3+3r^2+3r]\\
&<&\frac{1}{2r}[2r\delta^3+(-r^2+3r)\delta^2+(-r^3+2r^2+4r)\delta-r^3+3r^2+3r]\\
&=&\delta^3-\frac{r-3}{2}\delta^2-\frac{r^2-2r-4}{2}\delta-\frac{r^2-3r-3}{2},
\end{eqnarray*}
which contradicts $n\geq \delta^3-\frac{r-3}{2}\delta^2-\frac{r^2-2r-4}{2}\delta-\frac{r^2-3r-3}{2}$. Hence $f(n)\geq 0$.
Then
\begin{eqnarray*}
\rho'''+1 &<&n-r-1-(s+1)(\delta-s+1)-\frac{1}{{sr-s+r}}f(n)\\
&<&n-r-1-(s+1)(\delta-s+1)\\
&\leq& \rho''',
\end{eqnarray*}
a contradiction. Therefore, we have $\rho'''<n-r-1-(s+1)(\delta-s+1)$.
\end{proof}

By Claim \ref{claim101} and $s< \delta$, we obtain that
\begin{eqnarray*}
\rho'''&<&n-r-1-(s+1)(\delta-s+1)\\
&=&n-\delta-r-1-[(\delta-s)s+1]\\
&<&n-\delta-r-1.
\end{eqnarray*}
Note that $K_{n-\delta-r}\subset S_{\delta+r, \delta}\vee(K_{n-2\delta-r-1}+I_{\delta+1})$. Then
$$n-\delta-r-1=\rho(K_{n-\delta-r})<\rho(S_{\delta+r, \delta}\vee(K_{n-2\delta-r-1}+I_{\delta+1})).$$
Combining (\ref{equ1}) and (\ref{equ7}), we have
$$\rho(G)\leq\rho(G')\leq\rho(G''')<n-\delta-r-1<\rho(S_{\delta+r, \delta}\vee(K_{n-2\delta-r-1}+I_{\delta+1})),$$
which contradicts the assumption, as desired.  \hspace*{\fill}$\Box$

\vspace{5mm}
\noindent
{\bf Declaration of competing interest}
\vspace{3mm}

The authors declare that they have no known competing financial interests or personal relationships that could have appeared to influence the work reported in this paper.



\end{document}